\numberwithin{equation}{section}
\newtheorem{theorem}{Theorem}[section]
\newtheorem{lemma}[theorem]{Lemma}
\newtheorem{corollary}[theorem]{Corollary}
\theoremstyle{definition}
\newtheorem{definition}[theorem]{Definition}
\newtheorem{def-prop}[theorem]{Definition-Proposition}
\newtheorem{remark}[theorem]{Remark}
\newtheorem{example}[theorem]{Example}
\newtheorem*{acknowledgement}{Acknowledgement}
\newtheorem{setup}[theorem]{Setup}
\newtheorem*{Mysketch}{Sketch of proof} 
  {\pushQED{\qed}\begin{Mysketch}}
  {\popQED\end{Mysketch}}
\DeclareMathOperator{\reg}{reg}
\DeclareMathOperator{\stab}{stab}
\DeclareMathOperator{\Spec}{Spec}
\DeclareMathOperator{\Proj}{Proj}
\newcommand{\CC}{{\mathbb C}}
\newcommand{\PP}{{\mathbb P}}
\newcommand{\ZZ}{{\mathbb Z}}
\newcommand{\NN}{{\mathbb N}}
\def\mm{{\mathfrak m}}
\def\pp{{\mathfrak p}}
\def\qq{{\mathfrak q}}
\def\O{{\mathcal O}}
\def\I{{\mathcal I}}
\def\L{{\mathcal L}}
\def\M{{\mathcal M}}
\def\E{{\mathcal E}}
\def\R{{\mathcal R}}
\def\S{{\mathcal S}}
\def\F{{\mathcal F}}
\def\1{{\bf 1}}
\def\0{{\bf 0}}
\def\ix{{\overline{X}}}
\def\tx{{\widetilde{X}}}
\begin{document}

\title{Fiber invariants of projective morphisms and regularity of powers of ideals}

\author{Sankhaneel Bisui}
\address{Tulane University \\ Department of Mathematics \\
6823 St. Charles Ave. \\ New Orleans, LA 70118, USA}
\email{sbisui@tulane.edu}

\author{Huy T\`ai H\`a}
\address{Tulane University \\ Department of Mathematics \\
6823 St. Charles Ave. \\ New Orleans, LA 70118, USA}
\email{tha@tulane.edu}

\author{Abu Chackalamannil Thomas}
\address{Tulane University \\ Department of Mathematics \\
6823 St. Charles Ave. \\ New Orleans, LA 70118, USA}
\email{athoma17@tulane.edu}

\keywords{regularity, $a$-invariant, powers of ideals, asymptotic linearity, fibers, morphism of schemes}
\subjclass[2010]{13D45, 13D02, 14B15, 14F05}

\begin{abstract}
We introduce an invariant, associated to a coherent sheaf over a projective morphism of schemes, which controls when sheaf cohomology can be passed through the given morphism. We then use this invariant to estimate the stability indexes of the regularity and $a^*$-invariant of powers of homogeneous ideals. Specifically, for an equigenerated homogeneous ideal $I$ in a standard graded algebra over a Noetherian ring, we give bounds for the smallest values of power $q$ starting from which $a^*(I^q)$ and $\reg(I^q)$ become linear functions.
\end{abstract}

\maketitle

\begin{center}
{\it In honor of Professor L\^e V\u{a}n Thi\^em's centenary}
\end{center}

\section{Introduction} \label{sec.intro}

A celebrated result, proven independently by Kodiyalam \cite{K} and Cutkosky, Herzog and Trung \cite{CHT}, states that if $I$ is a homogeneous ideal in a standard graded algebra over a field then the regularity of $I^q$ is asymptotically a linear function; that is, there exist constants $d$ and $b$ such that $\reg(I^q) = dq+b$ for all $q \gg 0$. This result was extended to standard graded algebras over a Noetherian ring by Trung and Wang \cite{TW} (see also \cite{BCH, W} for the $G$-graded situation, where $G$ is any abelian group). A similar statement for the closely related $a^*$-invariant and, more generally, all the $a$-invariants was established by the author in \cite{Ha} and by Chardin in \cite{Ch1}. The constant $d$ was implicitly described in \cite{K} and made more precise in \cite{TW}. It has been an important problem since then to understand the constant $b$ and the minimum power starting from which $\reg(I^q)$ becomes a linear function (cf. \cite{B, Ch1, Ch2, Ch3, EH, EU, Ha, Tim}).  Computing these invariants for special classes of ideals have also been the subject of many recent works (cf. \cite{AB, BBH, BHT, Borna, BCV, Gu, HTT, HangT, JNS, JS1, JS2, Lu, R, SF, SFY}).

Much of attention was paid toward the particular case when $I$ is equigenerated. Let $A$ be a standard graded algebra over a Noetherian ring $A_0$, and let $I = (f_0, \dots, f_m) \subseteq A$ be a homogeneous ideal generated by $(m+1)$ forms of degree $d > 0$. Let $X = \Proj A \subseteq \PP^n_{A_0}$, let $\ix$ be (the closure of) the image of the rational map $\varphi: X \dashrightarrow \PP^m_{A_0}$ defined by $[f_0:\dots:f_m]$, and let $\tx$ be the blowup of $X$ centered at $I$. Then, $\tx$ can be naturally identified with (the closure of) the graph $\Gamma$ of $\varphi$ inside the bi-projective space $\PP^n_{A_0} \times \PP^m_{A_0}$. Under this identification, the morphism $\pi: \tx \rightarrow X$, induced by the projection map $\PP^n_{A_0} \times \PP^m_{A_0} \rightarrow \PP^n_{A_0}$, coincides with the blowing-up morphism of $X$ centered at $I$. We have the following diagram
$$\begin{array}{rcccl} & & \tx \simeq \Gamma & \subseteq & \PP^n_{A_0} \times \PP^m_{A_0} \\
\pi & \swarrow & & \searrow & \phi \\
X & & \stackrel{\varphi}{-\joinrel\dasharrow} & & \ix \subseteq \PP^m_{A_0}\end{array}$$

It turns out that local invariants associated to the projection map $\phi$ in fact govern the constants in the asymptotic linear forms of $a^*(I^q)$ and $\reg(I^q)$, for $q \gg 0$. More specifically, in a series of work \cite{Ch1, EH, Ha}, it was proven that if $a^*_\phi$ and $\reg_\phi$ are the maximum $a^*$-invariant and regularity of fibers of $\phi$ (see Section \ref{sec.ainv} for precise definitions) then for all $q \gg 0$, we have
$$a^*(I^q) = dq+a^*_\phi \text{ and } \reg(I^q) = dq + \reg_\phi.$$

The stability indexes of $I$, namely,
\begin{align*}
\stab_{\reg}(I) & = \min\{q_r ~\big|~ \reg(I^q) = dq+\reg_\phi \ \forall q \ge q_r\} \\
\stab_a(I) & = \min\{q_a ~\big|~ a^*(I^q) = dq+a^*_\phi \ \forall \ q \ge q_a\},
\end{align*}
have also been investigated in \cite{B, Ch3, EU} for $A_+$-primary ideals, and in \cite{Ch2} for equigenerated ideals.
More precisely, let $\R = A[It]$ be the Rees algebra of $I$. Then, $\R$ carries a natural bi-graded structure given by $\R = \bigoplus_{p,q \in \ZZ}\R_{(p,q)}$, where $\R_{(p,q)} = [I^q]_{p+dq}t^q$. We can also view $\R$ as a $\ZZ$-graded ring with $\R = \bigoplus_{p \in \ZZ}\R_{(p,*)} = \bigoplus_{q \in \ZZ}\R_{(*,q)}$, where
$$\R_{(p,*)} = \bigoplus_{q' \in \ZZ} \R_{(p,q')} \text{ and } \R_{(*,q)} = \bigoplus_{p' \in \ZZ}\R_{(p',q)}.$$
Particularly, $\R_{(p,*)}$ is a graded algebra over $\R_{(0,*)} = A_0[I t]$ and $\R_{(*,q)}$ is a graded algebra over $\R_{(*,0)}=A$.
The following results have been obtained.
\begin{enumerate}
\item \cite[Proposition 6.7]{Ch2} Suppose that for any $\pp \in \Proj A$, $\reg (A_\pp[I_\pp t]) = 0$. Then
\begin{align}
a^*(I^q) \le dq + a^*_\phi \text{ for all } q > \max_{p > a^*_\phi}\{a^*(\R_{(p,*)})\}.\label{eq.main}
\end{align}
\item \cite[Theorem 1.5]{Ch3} and \cite[Theorem 1.1]{EU} (see also \cite{B}) Suppose that $I$ is a homogeneous $A_+$-primary ideal in a standard graded polynomial ring $A$ over a field. Then, for all $q > a^*\big(\R_{(a^*_\phi+1,*)}\big)$, we have
\begin{align}
\reg(I^q) = dq + \reg_\phi. \label{eq.maincor}
\end{align}
\end{enumerate}

The goal of this paper is to extend the bound (\ref{eq.main}) in two directions. On one hand, we shall show that the upper bound (\ref{eq.main}) holds without any restriction on $I$. Instead of imposing the condition $q > \max_{p > a^*_\phi}\{a^*(\R_{(p,*)})\}$, we only need $q > a^*(\R_{(a^*_\phi+1,*)})$. The tradeoff is that we shall also need $q > a^*_\pi$. In many special cases of interest, for example, when $I$ satisfies the condition of \cite[Proposition 6.7]{Ch2} or when the Rees algebra of $I$ is Cohen-Macaulay, this requirement $q > a^*_\pi$ is redundant (i.e., $a^*_\pi \le 0$). On the other hand, we shall achieve a lower bound for $a^*(I^q)$ and, as a consequence, utilize local and global invariants associated to both projection maps $\pi$ and $\phi$ to give an estimate for the stability index $\stab_a(I)$ of $I$. We shall now state our main theorem, leaving unexplained notations until later.

\noindent{\bf Theorem \ref{thm.main}.}
Let $A$ be a standard graded algebra over $A_0$ and let $I \subseteq A$ be a homogeneous ideal generated in degree $d > 0$. Let $\pi: \widetilde{X} \rightarrow X$ be the blowup of $X$ along the ideal sheaf of $I$ and let $\phi: \widetilde{X} \rightarrow \overline{X}$ be the natural projection onto its second coordinate.
\begin{enumerate}
\item For all $q > \max \left\{a^*_\pi, a^*\left(\R_{(a^*_\phi+1,*)}\right)\right\}$, we have
	$$a^*(I^q) \le dq + a^*_\phi.$$
\item For all $q \ge \max\left\{a^*_\pi+1, \reg\left(\widetilde{\R_{(a^*_\phi,*)}}\right), \reg^\phi_*(\O_\tx(a^*_\phi,0))\right\},$ we have
$$a^*(I^q) \ge dq+ a^*_\phi.$$
\end{enumerate}
In particular,
$$\stab_a(I) \le \max \left\{a^*_\pi+1, a^*\left(\R_{(a^*_\phi+1,*)}\right)+1, \reg\left(\widetilde{\R_{(a^*_\phi,*)}}\right), \reg^\phi_*(\O_\tx(a^*_\phi,0))\right\}.$$

As a consequence of Theorem \ref{thm.main}, if the $a^*$-invariant \emph{defect sequence} $\{a^*(I^q) - dq\}_{q \in \NN}$ of $I$ is a non-increasing sequence, e.g., when $A$ is polynomial ring over a field and $I$ is a homogeneous $A_+$-primary ideal, then we have $a^*(I^q) = dq + a^*_\phi$ for all $q > \max\big\{a^*_\pi, a^*\big(\R_{(a^*_\phi+1, *)}\big)\big\}$; see Corollary \ref{cor.defect}. Particularly, we recover the equigenerated version of (\ref{eq.maincor}).

To prove Theorem \ref{thm.main}, we investigate the vanishing of sheaf cohomology groups of $\widetilde{I}^q(p+dq)$, for $p,q \in \ZZ$, where $\widetilde{I}$ is the ideal sheaf of $I$ on $X$. Our method for this investigation is to lift these cohomology groups through $\pi$ to sheaf cohomology of $\O_\tx(p,q)$ over $\tx$, then push those cohomology groups through $\phi$, and finally examine when the resulting sheaf cohomology groups on $\ix$ vanish.

To implement this method, we begin by studying the question of when sheaf cohomology can be pushed forward and/or pulled backward through a projective morphism of schemes. To this end, associating to a projective morphism $\pi: Y \rightarrow X$ and a coherent sheaf $\F$ on $Y$, we introduce an invariant $a^*_\pi(\F)$ that governs the higher direct images of $\F$ through $\pi$. Particularly, in Theorem \ref{pro.higherimage}, we establish basic vanishing and nonvanishing properties of sheaf cohomology groups of $\F$ that are controlled by this invariant.
Such an invariant $a^*_\pi(\F)$ was first introduced in \cite{HT} in the study of arithmetic Macaulayfication of projective schemes, and later used in \cite{Ch1, Ch2, Ch3, Ha} in studying the $a^*$-invariant and regularity of powers of ideals.

\begin{acknowledgement} The authors would like to thank Marc Chardin and Ngo Viet Trung for many stimulating discussions on the regularity of powers of ideals over the years. The second named author is partially supported by Louisiana Board of Regents (grant \#LEQSF(2017-19)-ENH-TR-25).
\end{acknowledgement}


\section{Preliminaries} \label{sec.prel}

In this section, we collect important notation and terminology used in the paper. For unexplained basics, we refer the interested reader to the standard texts \cite{BS, Hartshorne}. Throughout the paper, all rings and schemes are assumed to be Noetherian.

\subsection*{Regularity and $a$-invariants.} Regularity was initially defined by Mumford \cite{Mum} for coherent sheaves on projective schemes. This notion was then generalized to finitely generated graded modules over graded algebras (cf. \cite{BS}).

\begin{definition}
Let $R$ be a finitely generated graded algebra over a ring $A = R_0$. Let $X = \Proj R$ and let $\F$ be a coherent sheaf on $X$. For an integer $r \in \NN$, we say that $\F$ is \emph{$r$-regular} if $H^i(X, \F(r-i)) = 0$ for all $i > 0$. The \emph{regularity} of $\F$ is defined to be
$$\reg(\F) = \min \{r \in \ZZ ~\big|~ \F \text{ is } r\text{-regular}\}.$$
\end{definition}

Since $H^i(X, \F) = 0$ for all $i \gg 0$, and $H^i(X,\F(n)) = 0$ for all $i > 0$ and $n \gg 0$ (cf. \cite[Theorems II.2.7 and II.5.3]{Hartshorne}), $\reg (\F)$ is a well-defined and finite invariant.

\begin{definition}
Let $R$ be a finitely generated graded algebra over a ring $A = R_0$. Set $R_+ = \bigoplus_{n > 0}R_n$. Let $M$ be a finitely generated graded $R$-module. For $i \ge 0$, the $i$-th \emph{$a$-invariant} of $M$ is defined to be
$$a^i(M) = \left\{\begin{array}{ll} \sup \ \left\{n \in \ZZ ~\big|~ \left[H^i_{R_+}(M)\right]_n \not= 0\right\} & \text{ if } H^i_{R_+}(M) \not= 0 \\
-\infty & \text{ otherwise.}\end{array}\right.$$
The \emph{$a^*$-invariant} and \emph{regularity} of $M$ are defined as follows:
$$a^*(M) = \max_{i \ge 0} \{a^i(M)\} \text{ and } \reg(M) = \max_{i\ge 0} \{a^i(M)+i\}.$$
\end{definition}

It is well known that $a^i(M) < \infty$ for all $i \ge 0$, and that $a^i(M) = -\infty$ for all $i \gg 0$ (cf. \cite[Theorem 15.1.5]{BS} and \cite[Theorem 2.1]{Ch1}).
Thus, $a^*(M)$ and $\reg(M)$ are well-defined and finite invariants.

\begin{remark} Let $X = \Proj R$ be a projective scheme over a ring $A$, and let $F$ be a finitely generated graded $R$-module. Let $\F$ be the coherent sheaf on $X$ associated to $F$. Then, the Serre-Grothendieck correspondence gives
	$$\reg(F) \ge \reg(\F).$$
\end{remark}

\subsection*{Coherent sheaves and Serre-Grothendieck correspondence.} Let $R$ be a standard graded algebra over a ring $A$ and let $R_+ = \bigoplus_{n > 0}R_n$ be its irrelevant ideal. Let $X = \Proj R$ and let $\widetilde{F}$ be a coherent sheaf associated to an $R$-module $F$ on $X$. The Serre-Grothendieck correspondence gives a short exact sequence
$$0 \rightarrow H^0_{R_+}(F) \rightarrow F \rightarrow \bigoplus_{n \in \ZZ} H^0(X, \widetilde{F}(n)) \rightarrow H^1_{R_+}(F) \rightarrow 0,$$
and isomorphisms, for $i \ge 1$,
$$H^{i+1}_{R_+}(F) \simeq \bigoplus_{n \in \ZZ} H^i(X, \widetilde{F}(n)).$$

\begin{definition}\label{def.global}
Let $\pi: Y \rightarrow X$ be a projective morphism of projective schemes. Let $\F$ be a coherent sheaf on $Y$. Define
$$\reg^\pi_j(\F) = \reg(R^j \pi_* \F) \text{ and } \reg^\pi_*(\F) = \max_{j \ge 0}\{\reg (R^j \pi_* \F)\}.$$
\end{definition}

Note that $\pi_*\F$ and $R^j \pi_* \F$ are coherent sheaves on $X$ and that $R^j \pi_* \F = 0$ for all $j \gg 0$ (cf. \cite[Proposition III.8.5 and Theorem III.8.8]{Hartshorne}). Thus, $\reg^\pi_j(\F)$ and $\reg^\pi_*(\F)$ are well-defined and finite invariants. The following lemma is a well known consequence of the definition of regularity.

\begin{lemma} \label{lem.global}
Let $\pi: Y \rightarrow X$ be a projective morphism of projective schemes and let $\F$ be a coherent sheaf on $Y$. Let $\O_X(1)$ be a very ample invertible sheaf on $X$. Then for all $j \ge 0$ and $q \ge \reg^\pi_j(\F)$, $R^j \pi_* \F \otimes_{\O_X} \O_X(q)$ is generated by global sections.
\end{lemma}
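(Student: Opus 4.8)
The plan is to observe that the morphism $\pi$ and the sheaf $\F$ enter the statement only through the single coherent sheaf $\G := R^j \pi_* \F$ on $X$, so that the assertion reduces to a standard fact in Castelnuovo--Mumford theory. First I would record that $\G$ is coherent on $X$ (as already noted in the excerpt following Definition \ref{def.global}, via Grothendieck's coherence theorem for higher direct images under a projective morphism), so that $\reg(\G)$ is defined and finite and, by the very definition of $\reg^\pi_j$, equals $\reg^\pi_j(\F)$. After this reduction, what must be shown is the purely sheaf-theoretic claim: if $\G$ is a coherent sheaf on the projective scheme $X$ equipped with the very ample invertible sheaf $\O_X(1)$, and $q \ge \reg(\G)$, then $\G(q)$ is generated by its global sections.

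Next I would invoke the two basic properties of regularity due to Mumford. The first is the upward stability of regularity: if $\G$ is $m$-regular then it is $m'$-regular for every $m' \ge m$; since $\G$ is $\reg(\G)$-regular by definition and $q \ge \reg(\G)$, the sheaf $\G$ is in particular $q$-regular. The second is the global generation property: an $m$-regular coherent sheaf $\G$ has $\G(m)$ generated by its global sections. Applying the latter with $m = q$ shows that $\G(q) = R^j \pi_* \F \otimes_{\O_X} \O_X(q)$ is globally generated, which is exactly the assertion of the lemma. The entire substance of the argument thus rests on these two facts, which I would treat as known --- indeed they are what makes the lemma ``well known''.

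The one point I expect to require care, and which I regard as the main obstacle, is that here $A = R_0$ is an arbitrary Noetherian ring rather than a field, whereas the most familiar proofs of Mumford's lemmas are phrased over a field. To deal with this I would use $\O_X(1)$ to fix a closed immersion $\iota \colon X \hookrightarrow \PP^N_A$ and transport the question along the exact pushforward $\iota_*$, which preserves cohomology, regularity, and global generation, thereby reducing to $X = \PP^N_A$. The inductive proof of Mumford's lemmas on $\PP^N_A$ relies only on the vanishing of $H^i(\PP^N_A, \O(n))$ in the intermediate range $0 < i < N$ together with a suitably generic hyperplane section yielding a short exact sequence $0 \to \G(-1) \to \G \to \G|_H \to 0$; both inputs persist over a Noetherian base (the genericity of the section being arranged, if necessary, by checking the regularity condition fibrewise or after a faithfully flat extension), and the corresponding statements are recorded in the Noetherian generality in \cite{BS}. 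With these facts in place over $A$, the reduction of the first paragraph completes the proof.
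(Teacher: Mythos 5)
Your proposal is correct and matches the paper's approach: the paper's entire proof is a citation of Mumford's Lecture 14 in \cite{Mum}, i.e., precisely the reduction to the coherent sheaf $R^j\pi_*\F$ on $X$ followed by the standard facts that an $m$-regular sheaf is $m'$-regular for $m' \ge m$ and is globally generated after twisting by $\O_X(m)$. Your extra paragraph handling the Noetherian base ring (closed immersion into $\PP^N_A$, and faithfully flat extension to secure a suitable hyperplane section) supplies exactly the detail the paper delegates to the citation, and is sound.
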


\begin{proof} The assertion follows from \cite[Lecture 14]{Mum}.
\end{proof}

\subsection*{Blowing-up morphisms and bi-projective schemes.} Let $X$ be a scheme and let $\S$ be a sheaf of graded $\O_X$-algebras. The Proj of $\S$ is defined as follows (cf. \cite{Hartshorne}). For each open subset $U = \Spec A$ of $X$, let $\S_U = \Gamma(U, \S\big|_U)$. Then $\S_U$ is a graded $A$-algebra. Consider the projective scheme $\Proj \S_U$ and its natural morphism $\pi_U: \Proj \S_U \rightarrow \Spec A$. These schemes and morphisms glue together to give a scheme $\Proj \S$, a natural morphism $\pi: \Proj \S \rightarrow X$, and an induced \emph{twisting sheaf} $\O(1)$ on $\Proj \S$ (note that $\O(1)$ is not necessarily very ample on $\Proj \S$).

\begin{definition} Let $X$ be a scheme and let $\I$ be a coherent sheaf of ideals on $X$. Consider the sheaf of graded $\O_X$-algebras $\R = \bigoplus_{n \ge 0} \I^n$. The \emph{blowup of $X$ along $\I$} is defined to be $\Proj \R$ together with the natural blowing-up morphism $\pi: \Proj \R \rightarrow X$.
\end{definition}

When $X$ is an affine scheme, the blowup of $X$ along an ideal sheaf is closely related to the familiar notion of Rees algebras, which we shall now recall.

\begin{definition} Let $A$ be a Noetherian ring and let $I \subseteq A$ be an ideal. The \emph{Rees algebra} of $I$ is defined to be the graded $A$-algebra
$$A[It] = A \oplus It \oplus I^2t^2 \oplus \dots \subseteq A[t].$$
\end{definition}

By construction, if $X = \Spec A$ is an affine scheme and $I \subseteq A$ is an ideal then the blowup of $X$ along the ideal sheaf of $I$ is given by $\pi: \Proj A[It] \rightarrow \Spec A$.

\begin{lemma}[\protect{\cite[Lemma 1.1]{HT}}] \label{lem.aRees}
Let $A$ be a Noetherian ring and let $\R = A[It]$ be the Rees algebra of an ideal $I \subseteq A$. If $\R$ is a Cohen-Macaulay ring then $a^*(\R) = -1$.
\end{lemma}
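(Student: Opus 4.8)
The result is classical in flavor (it is the ``only if'' shadow of the Ikeda--Trung criterion), and the plan is to prove the two inequalities $a^*(\R)\le -1$ and $a^*(\R)\ge -1$ separately, using only the two structural exact sequences of a Rees algebra together with the Cohen--Macaulay hypothesis. I would first reduce to the case in which $A$ is local with infinite residue field and $I$ has positive grade, so that $\dim\R=\dim A+1$ and $H^0_{\R_+}(\R)=0$ (the $\R_+$-torsion of $\R$ vanishes once $I$ contains a nonzerodivisor). This reduction is harmless: each graded component $[H^i_{\R_+}(\R)]_n$ is a finitely generated $A$-module, its vanishing may be tested after the faithfully flat maps $A\to A[u]_{\mm[u]}$ and $A\to A_\pp$, and being Cohen--Macaulay is stable under both. (Some positive-grade hypothesis is genuinely needed: if $I$ is nilpotent then $\R_+$ is nilpotent, $H^0_{\R_+}(\R)=\R$, and $a^*(\R)\ge 0$.)

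For the lower bound I would exploit the two sequences $0\to\R_+\to\R\to A\to 0$, with $A=\R/\R_+$ in internal degree $0$, and $0\to I\R\to\R\to G\to 0$, where $G=\mathrm{gr}_I(A)=\bigoplus_{n\ge 0}I^n/I^{n+1}$ and $I\R\cong\R_+(1)$. Since $H^0_{\R_+}(\R)=0$, the first sequence yields an injection $A\cong H^0_{\R_+}(A)\hookrightarrow H^1_{\R_+}(\R_+)$, so that $[H^1_{\R_+}(\R_+)]_0\ne 0$; using $I\R\cong\R_+(1)$ this reads $[H^1_{\R_+}(I\R)]_{-1}\ne 0$. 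Because $G_n=0$ for $n<0$ we have $[H^0_{\R_+}(G)]_{-1}=0$, so the second sequence gives an injection $[H^1_{\R_+}(I\R)]_{-1}\hookrightarrow[H^1_{\R_+}(\R)]_{-1}$. Hence $[H^1_{\R_+}(\R)]_{-1}\ne 0$ and $a^*(\R)\ge a^1(\R)\ge -1$. This half uses nothing beyond positive grade.

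The Cohen--Macaulay hypothesis enters in the upper bound $a^*(\R)\le -1$, which I would prove by induction on $\dim A$. In the base case $\dim A=0$ the positive-grade assumption forces $I=A$, so $\R=A[t]$, and one computes directly that $H^1_{(t)}(A[t])=\bigoplus_{n<0}At^n$ while all other local cohomology vanishes, giving $a^*(\R)=-1$. For the inductive step, from $\dim(\R/\mm\R)=\ell(I)\le\dim A$ and $\dim\R=\dim A+1$ one gets $\height(\mm\R)\ge 1$, hence (by Cohen--Macaulayness) $\operatorname{grade}(\mm\R,\R)\ge 1$; with an infinite residue field a general $x\in\mm$ is then both $\R$-regular and superficial for $I$, so $\R/x\R$ is again the Rees algebra of a positive-grade ideal in the lower-dimensional local ring $A/xA$, and it is Cohen--Macaulay. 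The induction hypothesis gives $[H^i_{\R_+}(\R/x\R)]_n=0$ for all $i$ and all $n\ge 0$; feeding this into the long exact sequence of $0\to\R\xrightarrow{\,x\,}\R\to\R/x\R\to 0$ in each internal degree $n\ge 0$ shows that multiplication by $x$ is surjective on the finitely generated $A$-module $[H^i_{\R_+}(\R)]_n$, whence Nakayama (as $x\in\mm$) forces $[H^i_{\R_+}(\R)]_n=0$. Combining the two bounds gives $a^*(\R)=-1$.

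The main difficulty I anticipate is the inductive step, specifically the claim that a generic $x\in\mm$ can be chosen so that $\R/x\R$ is once more a Rees algebra of an ideal of positive grade in $A/xA$ of the expected dimension. This is exactly where both hypotheses are used essentially---Cohen--Macaulayness to produce a regular element inside $\mm$ (so that the height of $\R_+$ does not collapse, as it would if one instead reduced by a regular element of $\R_+$ itself), and the Rees-algebra structure to keep the quotient inside the class over which the induction runs. Verifying that a superficial-and-regular $x$ exists and that $\R/x\R\cong (A/xA)[(I+xA)/xA\cdot t]$ is the only non-formal point; everything else is a diagram chase.
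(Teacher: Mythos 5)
The paper gives no argument for this lemma at all---it is quoted verbatim from \cite[Lemma 1.1]{HT}---so your proposal must stand on its own, and it does not. Your lower-bound half is correct and cleanly executed: with $\operatorname{grade}(I)\ge 1$ one has $H^0_{\R_+}(\R)=0$, and the two sequences $0\to\R_+\to\R\to A\to 0$ and $0\to I\R\to\R\to \mathrm{gr}_I(A)\to 0$ together with $I\R\cong\R_+(1)$ do force $\left[H^1_{\R_+}(\R)\right]_{-1}\ne 0$; your observation that some positive-grade hypothesis is implicitly needed is also fair (for nilpotent $I$ the ring $\R$ is a finite $A$-module, $H^0_{\R_+}(\R)=\R$, and the statement fails as literally written). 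The genuine gap is the inductive step of the upper bound, precisely at the point you flagged. For $x\in\mm$ a nonzerodivisor, the natural surjection $\R/x\R\twoheadrightarrow (A/xA)\left[\bar{I}t\right]$ with $\bar{I}=(I+xA)/xA$ has kernel $\bigoplus_{n}\left(xA\cap I^n\right)/xI^n$, which vanishes if and only if $I^n:_A x=I^n$ for every $n$, i.e., if and only if $x$ avoids every prime in $\bigcup_n\Ass(A/I^n)$. Whenever $\mm\in\Ass(A/I^n)$ for some $n$---in particular for \emph{every} $\mm$-primary $I$, the central case of this paper (Corollary \ref{cor.mprimary})---no choice of $x\in\mm$ works, general or not. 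Concretely, let $A=k[[u,v]]$ and $I=\mm=(u,v)$: here $\R\cong A[T_1,T_2]/(vT_1-uT_2)$ is Cohen--Macaulay, yet for any $x$ of order one, $xA\cap\mm^n=x\mm^{n-1}\ne x\mm^n$, so $\R/x\R$ is a nontrivial extension of the Rees algebra of $\bar{\mm}$ by $\bigoplus_n x\mm^{n-1}/x\mm^n\ne 0$. Consequently $\R/x\R$ is not in your inductive class; and, equally fatal, Cohen--Macaulayness passes from $\R$ to $\R/x\R$ but not to its proper quotient $(A/xA)[\bar{I}t]$, so the induction hypothesis cannot be invoked for the latter either. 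The phrase ``superficial for $I$'' cannot rescue this: superficial elements are elements of $I$, not of $\mm$, and even the correct notion only gives $(I^{n+1}:x)\cap I^c=I^n$ for large $n$, never $xA\cap I^n=xI^n$ for all $n$.

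The structural reason your section fails is that a hyperplane section of internal degree zero produces no shift in the grading and does not preserve the class of Rees algebras, whereas the invariant $a^*$ lives in the internal grading. The standard arguments in the literature for the upper bound instead descend by an element of degree one: for a suitable (filter-regular, superficial) $x\in I$, the element $xt\in\R_1$ is used via $0\to\R(-1)\xrightarrow{\ xt\ }\R\to\R/xt\R\to 0$, where the twist $(-1)$ is exactly what converts vanishing for the quotient in degrees $\ge n_0$ into vanishing for $\R$ in degrees $\ge n_0-1$, and the quotient $\R/xt\R$ is compared with $\mathrm{gr}_I(A)$ and with the Rees algebra of $I/(x)$ through controlled error terms; alternatively, one can deduce the vanishing $\left[H^i_{\R_+}(\R)\right]_n=0$ for $n\ge 0$ from the Ikeda--Trung description of the local cohomology of a Cohen--Macaulay Rees algebra in terms of that of $\mathrm{gr}_I(A)$, fed through the same two exact sequences you already use. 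As it stands, only your inequality $a^*(\R)\ge -1$ is proved; the Cohen--Macaulay half of the lemma remains open in your write-up.
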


For the purpose of this paper, we are particularly interested in the blowup of a projective scheme. Let $A$ be a standard graded algebra over a ring $A_0$ and let $I \subseteq A$ be a homogeneous ideal. The Rees algebra $A[It]$ of $I$, in this case, also carries a natural bi-graded structure, namely $A[It] = \bigoplus_{p,q \in \ZZ} (I^q)_p t^q$. We can define the bi-projective scheme $\text{Bi-Proj } A[It]$ of $A[It]$ with respect to this bi-graded structure.

\begin{lemma} \label{lem.biproj}
Let $A$ be a standard graded algebra over a ring $A_0$ and let $X = \Proj A$. Let $I \subseteq A$ be a homogeneous ideal and let $\I$ be the ideal sheaf on $X$ associated to $I$. Then, the blowup of $X$ along $\I$ is naturally identified with the bi-projective scheme $\text{Bi-Proj } A[It]$.
\end{lemma}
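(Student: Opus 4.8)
The plan is to realize both objects as relative Proj schemes over $X$ and then match them canonically, exploiting the structure of the bigrading on the Rees algebra $S = A[It] = \bigoplus_{p,q}(I^q)_p t^q$. The decisive structural observation is that the ``first'' graded subring $S^{(1)} = \bigoplus_p S_{(p,0)} = \bigoplus_p (I^0)_p t^0 = A$ recovers $A$ itself, so that $\Proj S^{(1)} = \Proj A = X$. This suggests computing $\text{Bi-Proj } S$ by first taking Proj in the $p$-grading (landing on $X$) and then a relative Proj in the $q$-grading. Concretely, I would view $S$ as a graded $A$-algebra via the $t$-grading, $S = \bigoplus_{q \ge 0} I^q t^q$, and sheafify each graded piece $I^q$ over $X$ to obtain the sheaf of graded $\O_X$-algebras $\bigoplus_{q \ge 0} \widetilde{I^q}$, whose relative Proj over $X$ is the candidate identification with the blowup.

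First I would verify the sheaf-level identity $\widetilde{I^q} = (\widetilde I)^q = \I^q$ on $X$. On a standard affine chart $D_+(x_i) = \Spec A_{(x_i)}$, the sheaf $\widetilde{I^q}$ restricts to the ideal $(I^q A_{x_i})_0$ and $\I^q$ to $\big((I A_{x_i})_0\big)^q$; since $x_i$ is an invertible element of degree one in $A_{x_i}$, every homogeneous degree-zero element of $(I A_{x_i})^q$ can be rescaled factor by factor into a product of degree-zero elements of $I A_{x_i}$, giving $(I^q A_{x_i})_0 = \big((I A_{x_i})_0\big)^q$. Hence $\bigoplus_q \widetilde{I^q}$ is precisely the Rees sheaf $\R = \bigoplus_q \I^q$, and its relative Proj over $X$ is by definition the blowup of $X$ along $\I$. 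The remaining, and principal, ingredient is the general fact that multiprojective Proj is computed by iterating relative Proj: for a finitely generated bigraded algebra $S$ over $S_{(0,0)}$, there is a canonical identification $\text{Bi-Proj } S \cong \Proj_X \big(\bigoplus_q \widetilde{S_{(*,q)}}\big)$ with $X = \Proj S^{(1)}$. I would prove this by comparing affine charts: the standard bihomogeneous chart $D_{++}(x_i, g_j t)$ of $\text{Bi-Proj } S$ is $\Spec (S_{x_i g_j t})_{(0,0)}$, while over $D_+(x_i)$ the relative Proj is $\Proj A_{(x_i)}[J_i t]$ with $J_i = (I A_{x_i})_0$, whose chart attached to the generator $g_j/x_i^{\deg g_j}$ of $J_i$ is the $t$-degree-zero part of the corresponding localization; a direct comparison of the two bidegree-$(0,0)$ localizations shows these Spec's coincide. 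Assembling these with $S = A[It]$ yields $\text{Bi-Proj } A[It] = \Proj_X \R$, the blowup.

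The main obstacle I anticipate is not any isolated computation but the bookkeeping required to make the identification genuinely \emph{natural}: one must check that the chart-by-chart isomorphisms are independent of the chosen homogeneous generators $x_i, g_j$, that they glue to a morphism of schemes over $X$ (so that the structural projection of $\text{Bi-Proj } A[It]$ to $\Proj S^{(1)} = X$ agrees with the blowing-up morphism $\pi$), and that the twisting sheaves $\O(1)$ on both sides are matched, so that the identification is compatible with the $\O_{\tx}(p,q)$ bigraded sheaves used later in the paper rather than being a mere abstract isomorphism. Once the iterated-Proj identification is available (and it may be cited as a standard feature of the relative Proj construction), the specialization to the Rees algebra is immediate, precisely because $S^{(1)} = A$ returns $X$ on the nose and the sheafified $q$-graded algebra is literally $\R$.
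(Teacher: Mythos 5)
Your proposal is correct, and its skeleton coincides with the paper's: sheafify the $t$-grading of the Rees algebra over $X$, identify the relative Proj of that sheaf of algebras with the blowup, and match the result with $\text{Bi-Proj } A[It]$. The execution, however, is genuinely different in two respects. The paper works pointwise: for each $\pp \in X$ it asserts $\Gamma(U, \R\big|_U) = A_{(\pp)}[I_{(\pp)}t]$ for $U = \Spec \O_{X,\pp} = \Spec A_{(\pp)}$, glues the affine blowups $\Proj A_{(\pp)}[I_{(\pp)}t] \rightarrow \Spec A_{(\pp)}$, checks that the local twisting sheaves glue to $\I\O_\tx = \O_\tx(1)$ (so one piece of the naturality bookkeeping you flag is indeed carried out there), and then concludes in a single sentence that the resulting bi-projective structure is that of $\text{Bi-Proj } A[It]$. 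Both of your key computations are left implicit in that argument: the identity $(I^q A_{x_i})_0 = \big((I A_{x_i})_0\big)^q$, i.e.\ $\widetilde{I^q} = \I^q$ (which is where standard gradedness enters, through invertible degree-one elements on the charts $D_+(x_i)$), and the chart-by-chart verification that $\text{Bi-Proj}$ is an iterated Proj, via the bidegree-$(0,0)$ localization comparison on $D_{++}(x_i, g_j t)$. Your finite-chart version is arguably the more careful one: the paper's statement that $\Spec \O_{X,\pp}$ is an open neighborhood of $\pp$ is not literally true (it is a localization, the intersection of all such neighborhoods), whereas the $D_+(x_i)$ genuinely cover $X$ since $A$ is standard graded, and your explicit chart comparison makes the identification with $\text{Bi-Proj } A[It]$ checkable rather than asserted. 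What the paper's pointwise formulation buys in exchange is direct reuse: the stalk identification $(\O_\tx)_\pp = A_{(\pp)}[I_{(\pp)}t]$ is precisely the form in which the lemma is applied later, e.g.\ in Lemma \ref{lem.large} and in the fiber invariants of Section \ref{sec.ainv}, so your route would want that stalk-level restatement recorded as a corollary of the chart computation.
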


\begin{proof} As before, let $\R = \bigoplus_{n \ge 0}\I^n$ and let $\tx = \Proj \R$ be the blowup of $X$ along $\I$.

Consider any point $\pp \in X$, and let $A_{(\pp)}$ denote the homogeneous localization of $A$ at the homogeneous prime ideal $\pp \subseteq A$. Then, $U = \Spec \O_{X,\pp} = \Spec A_{(\pp)}$ is an open neighborhood of $\pp$ in $X$. Moreover, we have
$\Gamma(U, \R\big|_U) = A_{(\pp)}[I_{(\pp)}t].$ Thus, $\tx$ is obtained by gluing the affine blowups $\Proj A_{(\pp)}[I_{(\pp)}t] \rightarrow \Spec A_{(\pp)}$ for $\pp \in X$. The twisting sheaf on $\Proj A_{(\pp)}[I_{(\pp)}t]$ is given by $I_{(\pp)}\O_{\Proj A_{(\pp)}[I_{(\pp)}t]}$. Since, by definition, $\I_\pp = I_{(\pp)}$ for all $\pp \in X$, these sheaves glue together to give $\I\O_\tx = \O_\tx(1)$, which agrees with the twisting sheaf of $\tx$.

This construction gives $\tx$ a bi-projective structure, which is the same as the bi-projective structure of $\text{Bi-Proj } A[It]$. Hence, we have $\tx \simeq \text{Bi-Proj } A[It]$.
\end{proof}


\section{Fiber $a$-invariants} \label{sec.ainv}

The aim of this section is to introduce the $a^*$-invariant and regularity associated to a coherent sheaf over a projective morphism of schemes, and to show that these invariants govern when sheaf cohomology groups can be passed through the given projective morphism. For simplicity of notation, throughout this section, unless stated otherwise, we shall be in the following setup.

\begin{setup} \label{setup1}
Let $X$ be a scheme and let $\R$ be a sheaf of finitely generated graded $\O_X$-algebras. Let $Y = \Proj \R$ and let $\O_Y(1)$ be the induced twisting sheaf on $Y$. Let $\pi: Y \rightarrow X$ be the resulting projective morphism, and let $\F$ be a coherent sheaf on $Y$.
\end{setup}

For an open affine subset $U = \Spec A \subseteq X$, let
$$\R_U = \Gamma(U, \R\big|_U) \text{ and } \F_U = \Gamma(U, \F\big|_U).$$
Then, by construction, $\R_U$ is a finitely generated graded $A$-algebra and $\F_U$ is a finitely generated graded $\R_U$-module. Thus, the $a$-invariants and regularity of $\F_U$ are defined as in Section \ref{sec.prel}. We shall extend these notions to define the fiber $a$-invariants and regularity of $\F$ over the morphism $\pi$ as follows. Our construction is slightly more general than that given in \cite{Ch1}, where similar invariants, introduced in \cite{Ha, HT}, were generalized to coherent sheaves over $\PP(\E)$ for a locally free coherent sheaf $\E$ of finite rank on $X$.

\begin{definition} Assume that we are in Setup \ref{setup1}.
\begin{enumerate}
\item Let $\pp \in X$ and $i \ge 0$. The \emph{$i$-th local $a$-invariant} of $\F$ at $\pp$ is defined to be
$$a^i_\pp(\F) = a^i(\F_{\Spec \O_{X,\pp}}) \text{ and } \reg_\pp(\F) = \reg(\F_{\Spec \O_{X,\pp}}).$$
\item The \emph{fiber $a$-invariants} and \emph{regularity} of $\F$ over $X$ are defined to be
\begin{align*}
& a^i_\pi(\F) = \sup_{\pp \in X} \{a^i_\pp(\F)\}, \ a^*_\pi(\F) = \max_{i \ge 0}\{a^i_\pi(\F)\}, \\
& \reg_\pi(\F) = \sup_{\pp \in X} \{\reg_\pp(\F)\} = \max_{i \ge 0}\{a^i_\pi(\F) + i\}.
\end{align*}
\item Define $r_\pi(\F) = \min\{r ~\big|~ a^*_\pi(\F) = a^r_\pi(\F)\}$.
\end{enumerate}
\end{definition}

When there is no confusion, we shall often write $a^*_\pi$ and $\reg_\pi$ for $a^*_\pi(\O_Y)$ and $\reg_\pi(\O_Y)$.

\begin{lemma} \label{lem.integer}
For any $\pp \in X$ and $i \ge 0$, we have $a^i_\pp(\F) < \infty$ and $a^i_\pi(\F) < \infty$. In particular, $a^*_\pi(\F) < \infty$ and $\reg_\pi(\F) < \infty$.
\end{lemma}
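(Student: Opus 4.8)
The plan is to reduce both finiteness assertions to the known finiteness of $a^i$ for a finitely generated graded module over a finitely generated graded algebra (the statement recalled in Section~\ref{sec.prel}), exploiting that $X$ is Noetherian—hence quasi-compact—together with the fact that local cohomology commutes with localization. The bound $a^i_\pp(\F) < \infty$ is immediate: by construction $\F_{\Spec \O_{X,\pp}}$ is a finitely generated graded module over the finitely generated graded $\O_{X,\pp}$-algebra $\R_{\Spec \O_{X,\pp}}$, so the cited result gives $a^i(\F_{\Spec \O_{X,\pp}}) < \infty$. The real content lies in the uniformity needed for $a^i_\pi(\F) = \sup_{\pp} a^i_\pp(\F) < \infty$.

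First I would cover $X$ by finitely many open affines $U_1 = \Spec A_1, \dots, U_k = \Spec A_k$, which is possible since $X$ is Noetherian. Fixing an index $j$ and a point $\pp \in U_j$, we have $\O_{X,\pp} = (A_j)_\pp$, and correspondingly $\R_{\Spec \O_{X,\pp}} = \R_{U_j} \otimes_{A_j} (A_j)_\pp$ and $\F_{\Spec \O_{X,\pp}} = \F_{U_j} \otimes_{A_j} (A_j)_\pp$. Since localizing at $\pp$ is flat over $A_j$ and inverts only degree-zero elements, the induced map $\R_{U_j} \to \R_{\Spec \O_{X,\pp}}$ is a flat homomorphism of graded rings carrying $(\R_{U_j})_+$ to $(\R_{\Spec \O_{X,\pp}})_+$. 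The key step is then to invoke the grading-preserving flat base change isomorphism for local cohomology (see \cite{BS}), which in each degree $n$ reads
$$\left[H^i_{(\R_{U_j})_+}(\F_{U_j})\right]_n \otimes_{A_j} (A_j)_\pp \ \cong\ \left[H^i_{(\R_{\Spec \O_{X,\pp}})_+}(\F_{\Spec \O_{X,\pp}})\right]_n.$$
Consequently, whenever the local cohomology of $\F_{U_j}$ vanishes in degree $n$—that is, whenever $n > a^i(\F_{U_j})$—its localization vanishes as well, which yields $a^i_\pp(\F) \le a^i(\F_{U_j})$ for every $\pp \in U_j$.

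Taking the supremum over $\pp \in U_j$ and then the maximum over the finitely many charts would give $a^i_\pi(\F) \le \max_{1 \le j \le k} a^i(\F_{U_j}) < \infty$, establishing the second assertion. For the ``in particular'' clause I would note that the same cited result provides, for each $j$, an index $c_j$ with $a^i(\F_{U_j}) = -\infty$ for all $i > c_j$; hence $a^i_\pi(\F) = -\infty$ for $i > \max_j c_j$, so the maxima defining $a^*_\pi(\F) = \max_{i \ge 0}\{a^i_\pi(\F)\}$ and $\reg_\pi(\F) = \max_{i \ge 0}\{a^i_\pi(\F) + i\}$ range over only finitely many finite quantities and are therefore finite. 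The one point I would single out as requiring care is the compatibility of the base change isomorphism with the internal $\ZZ$-grading: one must verify that localizing the $\O_X$-module structure leaves untouched the grading inherited from $\R$, so that the high-degree vanishing for $\F_{U_j}$ transfers \emph{degree by degree} to each stalk module $\F_{\Spec \O_{X,\pp}}$—which is exactly what produces the uniform bound over $\pp$.
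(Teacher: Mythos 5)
Your proof is correct and follows essentially the same route as the paper's: reduce to the affine case via a finite affine cover (quasi-compactness of the Noetherian $X$) and use that graded local cohomology commutes with localization to obtain the uniform bound $a^i_\pp(\F) \le a^i(\F_{U_j})$ for $\pp \in U_j$. The differences are cosmetic---the paper passes to a single affine chart ``without loss of generality'' where you keep the finite cover explicit, and you spell out the vanishing $a^i_\pi(\F) = -\infty$ for $i \gg 0$ underlying the ``in particular'' clause, a point the paper leaves implicit in saying it ``follows from the definition.''
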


\begin{proof} The second statement follows from the definition of $a^*_\pi(\F)$ and $\reg_\pi(\F)$. Also, by definition, $a^i_\pp(\F) < \infty$ for any $\pp \in X$ and $i \ge 0$. We shall complete the proof by showing that $a^i_\pi(\F) < \infty$.

Since $X$ is a Noetherian scheme, $X$ is quasi-compact and, thus, can be covered by a finite number of affine schemes. Thus, without loss of generality, we may assume that $X = \Spec A$ is an affine scheme. In this case, $\R$ is a graded $A$-algebra, and $\F$ is a finitely generated graded $\R$-module. In particular, we have $a^i(\F) < \infty$.

Observe that
$\R_{\Spec \O_{X,\pp}} = \R \otimes_A A_\pp \text{ and } \F_{\Spec \O_{X,\pp}} = \F \otimes_A A_\pp.$
For simplicity of notation, set $\R_\pp = \R \otimes_A A_\pp$ and $\F_\pp = \F\otimes_A A_\pp$.
Since local cohomology commutes with localization, we have
$$H^i_{\R_{\pp+}}(\F_\pp) = H^i_{\R_+}(\F) \otimes_A A_\pp.$$
This implies that for all $\pp \in X$, $a^i_\pp(\F) \le a^i(\F)$. Hence, $a^i_\pi(F) \le a^i(\F) < \infty$.
\end{proof}

\begin{remark}
Inspired by the proof of Lemma \ref{lem.integer}, for any point $\pp \in X$, we shall set $\R_\pp = \R_{\Spec \O_{X,\pp}}$ and $\F_\pp = \F_{\Spec \O_{X,\pp}}$. Then $\R_\pp$ is a graded $\O_{X,\pp}$-algebra and $\F_\pp$ is a finitely generated graded $\R_\pp$-module.
	
Note also that for each $n \in \ZZ$, the degree $n$ piece $(\F_\pp)_n$ of $\F_\pp$ is an $\O_{X,\pp}$-module. Let $\F_n$ be the sheaf on $X$ obtained by gluing sheaves $\widetilde{(\F_\pp)_n}$ on $\Spec \O_{X,\pp}$ for $\pp \in X$. 
\end{remark}

A particular situation that we are interested in is when $\pi: Y \rightarrow X = \Proj A$ is the blowing-up morphism along the ideal sheaf of a homogeneous ideal $I \subseteq A$, and $\F$ is the structure sheaf of $Y$. In this case, if $\F$ has good local properties, for example, $\F$ is locally Cohen-Macaulay over $X$, or if $I$ is nice enough, for instance, $I$ satisfies the condition of \cite[Proposition 6.7]{Ch2}, then we can effectively bound the invariants $a^*_\pi$ and $\reg_\pi$.

\begin{definition} The sheaf $\F$ is said to be \emph{locally Cohen-Macaulay over $X$} if for all $\pp \in X$, $\F_{\Spec \O_{X,\pp}}$ is a Cohen-Macaulay $\R_{\Spec \O_{X,\pp}}$-module.
\end{definition}

\begin{lemma}[\protect{\cite[Lemma 1.2]{HT}}] \label{lem.lCM}
Let $A$ be a standard graded algebra over a ring $A_0$ and let $X = \Proj A$. Let $I \subseteq A$ be a homogeneous ideal and let $\pi: \tx \rightarrow X$ be the blowing-up morphism of $X$ along the ideal sheaf of $I$. Then, $a^*_\pi \ge -1$ and the equality holds if $\O_\tx$ is locally Cohen-Macaulay over $X$.
\end{lemma}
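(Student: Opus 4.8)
The plan is to reduce both assertions to the computation of $a^*$-invariants of the local Rees algebras $\R_\pp = \O_{X,\pp}[\I_\pp t]$, and then to pin down the extremal behaviour at one well-chosen point. First I would record the local identification coming from Lemma \ref{lem.biproj}: over the affine chart $U = \Spec \O_{X,\pp}$ the morphism $\pi$ restricts to $\Proj \R_\pp \to \Spec \O_{X,\pp}$, so the structure sheaf $\O_\tx$ corresponds over this chart to the graded $\R_\pp$-module $\R_\pp$, and hence $a^i_\pp(\O_\tx) = a^i(\R_\pp)$. Since the proof of Lemma \ref{lem.integer} furnishes, on each member of a finite affine cover of $X$, a uniform bound $N$ with $a^i_\pp(\O_\tx) \le a^i(\O_\tx) = -\infty$ for all $i > N$ and all $\pp$, the maximum over the (now finite) range of $i$ commutes with the supremum over $\pp$, so that
$$a^*_\pi = a^*_\pi(\O_\tx) = \sup_{\pp \in X} a^*(\R_\pp).$$
Both claims thereby become statements about the numbers $a^*(\R_\pp)$.

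For the lower bound it suffices to exhibit a single point $\pp$ with $a^*(\R_\pp) \ge -1$, and the natural place to look is the locus where $\pi$ is an isomorphism, i.e.\ where $\I_\pp = \O_{X,\pp}$. There $\I_\pp^n = \O_{X,\pp}$ for all $n$, so $\R_\pp = \O_{X,\pp}[t]$ is a polynomial ring in the single variable $t$ with $(\R_\pp)_+ = (t)$. A direct local cohomology computation gives $H^0_{(t)}(\O_{X,\pp}[t]) = 0$, $H^i_{(t)}(\O_{X,\pp}[t]) = 0$ for $i \ge 2$, and $H^1_{(t)}(\O_{X,\pp}[t]) \cong \bigoplus_{n \le -1} \O_{X,\pp}\, t^n$, whose top nonzero degree is $-1$; hence $a^*(\R_\pp) = a^1(\R_\pp) = -1$. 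To see that such a point exists, note that $\tx \neq \emptyset$ forces $\I$ to be a non-nilpotent ideal sheaf, so $\I_\eta$ is not nilpotent in the Artinian local ring $\O_{X,\eta}$ for some minimal prime $\eta$ of $X$; a non-nilpotent ideal of an Artinian local ring is the unit ideal, whence $\I_\eta = \O_{X,\eta}$. This yields $a^*_\pi \ge a^*(\R_\eta) = -1$.

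For the equality under the local Cohen--Macaulay hypothesis I would simply feed Lemma \ref{lem.aRees} into the reduction of the first paragraph. If $\O_\tx$ is locally Cohen--Macaulay over $X$, then by definition each $\R_\pp$ is a Cohen--Macaulay ring, so Lemma \ref{lem.aRees} gives $a^*(\R_\pp) = -1$ at every point whose fibre is nonempty, while points with empty fibre contribute $-\infty$. Taking the supremum gives $a^*_\pi \le -1$, and combining this with the lower bound just established yields $a^*_\pi = -1$.

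I expect the main obstacle to be the bookkeeping in the reduction step rather than either of the two extremal computations: one must identify $(\O_\tx)_{\Spec \O_{X,\pp}}$ precisely with the Rees algebra $\R_\pp$ (as opposed to its $\R_+$-saturation), justify the interchange of $\sup_\pp$ and $\max_i$, and correctly treat the degenerate points at which the fibre of $\pi$ is empty so that they contribute $-\infty$ and do not spoil the equality in the Cohen--Macaulay case. Once the identification $a^i_\pp(\O_\tx) = a^i(\R_\pp)$ and the interchange are in place, the remaining work is light: the lower bound is the explicit polynomial-ring computation on the isomorphism locus, and the equality is an immediate application of Lemma \ref{lem.aRees}.
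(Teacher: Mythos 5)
The paper itself gives no argument for this lemma---it is imported wholesale from \cite[Lemma 1.2]{HT}---and your proof is essentially the argument of that cited source: reduce to the local Rees algebras $\R_\pp = \O_{X,\pp}[\I_\pp t]$ via the identification underlying Lemma \ref{lem.biproj} (the same identification the paper uses later, in Lemma \ref{lem.large}, as $(\O_\tx)_\pp = A_{(\pp)}[I_{(\pp)}t]$), obtain the lower bound from the explicit computation $a^*(\O_{X,\pp}[t]) = -1$ at a point off $V(\I)$, and obtain the upper bound in the Cohen--Macaulay case from Lemma \ref{lem.aRees}. The reduction, the interchange of $\sup_\pp$ and $\max_i$ (which in fact needs no finiteness input, since iterated suprema always commute; Lemma \ref{lem.integer} is only needed to know the values are not $+\infty$), and both extremal computations are correct.

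One side remark is wrong, and since you invoke it in the final step it should be corrected: points with empty fibre do \emph{not} contribute $-\infty$. If the fibre over $\pp$ is empty, then $\R_{\pp+}$ is nilpotent, so $H^0_{\R_{\pp+}}(\R_\pp) = \R_\pp \neq 0$ and hence $a^0_\pp(\O_\tx) \ge 0$; for instance $a^0_\pp = 0$ when $\I_\pp = 0$. Such points would actually falsify the equality assertion: if $\I$ vanished identically along a Cohen--Macaulay component of $X$, the stalks there would give $\R_\pp = \O_{X,\pp}$, which is Cohen--Macaulay with $a^*(\R_\pp) = 0$. The resolution is that the lemma---like Lemma \ref{lem.aRees}, which as literally stated fails for $I = 0$ since then $\R = A$ can be Cohen--Macaulay with $a^*(\R) = 0$---tacitly assumes the setting of \cite{HT}, in which $\I_\pp$ has positive grade at every point, so no fibre of $\pi$ is empty; under that standing hypothesis Lemma \ref{lem.aRees} applies at every $\pp$ and your supremum is indeed $-1$. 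The same caveat covers your existence step for a point with $\I_\pp = \O_{X,\pp}$, which assumes $\tx \neq \emptyset$: in the excluded degenerate case where $\I$ is nilpotent, the inequality $a^*_\pi \ge -1$ still holds trivially, by the same observation that $H^0_{\R_{\pp+}}(\R_\pp) = \R_\pp$ forces $a^0_\pp \ge 0$. With these degenerate cases either excluded by the implicit hypothesis or handled as above, your proof is complete and matches the intended one.
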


\begin{example} \label{ex.points}
Let $A = k[x_0, \dots, x_n]$ and let $I \subseteq R$ be the defining ideal of a fat point scheme in $\PP^n$. Let $\pi: \widetilde{\PP^n} \rightarrow \PP^n$ be the blowing up of $\PP^n$ along the ideal sheaf of $I$. In this case, $I$ has the form $I = \bigcap_{i=1}^s \pp_i^{m_i}$, where $\pp_i$ is the defining ideal of a closed point in $\PP^n$ and $m_i \in \NN$.

Let $\pp \in \PP^n$ be any point. Observe that if $\pp \not= \pp_i \ \forall \ i$ then $A_{(\pp)}[I_{(\pp)}t] = A_{(\pp)}[t]$ is a polynomial ring over $A_{(\pp)}$, and so it is Cohen-Macaulay. If, on the other hand, $\pp = \pp_i$ for some $i$, then $A_{(\pp)}[I_{(\pp)}t] = A_{(\pp)}[\pp_{(\pp)}^{m_i}t]$ is the Veronese subalgebra of $A_{(\pp)}[\pp_{(\pp)}t]$, where the later is Cohen-Macaulay since $\pp$ is a complete intersection. Thus, $A_{(\pp)}[I_{(\pp)}t]$ is Cohen-Macaulay. Hence, in this example, $\O_{\widetilde{\PP^n}}$ is locally Cohen-Macaulay over $\PP^n$.
\end{example}

\begin{example} \label{ex.determinant}
Let $A = k[x_{ij} ~\big|~ 1 \le i \le r, \ 1 \le j \le s]$ and let $I \subseteq A$ be the ideal generated by $t \times t$ minors of the generic matrix $M = \left(x_{ij}\right)_{1 \le i \le r, 1 \le j \le s}$ for some $1 \le t \le \min\{r,s\}$. Let $X = \Proj A$ and let $\pi: \tx \rightarrow X$ be the blowing up of $X$ along the ideal sheaf of $I$.

By \cite[Theorem 3.5]{EHu} and \cite[Theorem 3.3]{Bruns}, the Rees algebra $\R = A[It]$ is a Cohen-Macaulay ring. Let $\mm_\R$ be the maximal homogeneous ideal of $\R$. Then, $H^i_{\mm_\R}(\R) = 0$ for all $i < \dim \R$.
This implies that for any $\pp \in X$, $H^i_{\mm_\R}(\R) \otimes_A A_{(\pp)} = 0$ for all $i < \dim \R.$
Since local cohomology commutes with localization, it follows that $A_{(\pp)}[I_{(\pp)}t]$ is a Cohen-Macaulay ring for any $\pp \in X$. Hence, in this example, $\O_\tx$ is also locally Cohen-Macaulay over $X$.
\end{example}

Note that by the same arguments as in Example \ref{ex.determinant}, if the Rees algebra $A[It]$ is a Cohen-Macaulay ring and $\pi: \tx \rightarrow X$ is the blowup centered at $I$ then $\O_\tx$ is locally Cohen-Macaulay over $X$.

\begin{lemma} \label{lem.Prop64}
Let $A$ be a standard graded algebra over a ring $A_0$. Let $I \subseteq A$ be a homogeneous ideal such that for any $\pp \in \Proj A = X$, $\reg (A_\pp[I_\pp t]) = 0$. Let $\pi: \tx \rightarrow X$ be the blowup of $X$ along the ideal sheaf of $I$. Then $a^*_\pi \le \reg_\pi \le 0$.
\end{lemma}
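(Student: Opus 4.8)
The plan is to establish the two inequalities $a^*_\pi \le \reg_\pi$ and $\reg_\pi \le 0$ separately. The first is a purely formal consequence of the definitions and holds for \emph{any} coherent sheaf over any projective morphism, so the hypothesis on the local Rees algebras will enter only in the second. Throughout I specialize the fiber-invariant machinery of this section to the structure sheaf $\F = \O_\tx$ of the blowup $\pi : \tx \to X$, and invoke Lemma \ref{lem.integer} to guarantee that all the suprema and maxima below are finite.

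For the first inequality I would simply unwind the definition of the fiber invariants. By definition,
$$a^*_\pi = \max_{i \ge 0}\{a^i_\pi(\O_\tx)\} \quad\text{and}\quad \reg_\pi = \max_{i \ge 0}\{a^i_\pi(\O_\tx) + i\}.$$
Since $i \ge 0$ in both maxima, one has $a^i_\pi(\O_\tx) \le a^i_\pi(\O_\tx) + i$ for every $i$, and taking the maximum over $i$ yields $a^*_\pi \le \reg_\pi$. (Equivalently, this domination can be observed pointwise, $a^*_\pp(\O_\tx) \le \reg_\pp(\O_\tx)$, and then one passes to the supremum over $\pp \in X$.)

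For the bound $\reg_\pi \le 0$, the key step is to translate the hypothesis into a statement about $\reg_\pp(\O_\tx)$. Fix a point $\pp \in X = \Proj A$ and set $U = \Spec \O_{X,\pp} = \Spec A_{(\pp)}$. By Lemma \ref{lem.biproj}, locally over $U$ the blowup $\tx$ is the affine blowup $\Proj A_{(\pp)}[I_{(\pp)}t]$, so the graded module $(\O_\tx)_{\Spec \O_{X,\pp}}$ attached to the structure sheaf is precisely the homogeneous localization $A_{(\pp)}[I_{(\pp)}t]$ of the Rees algebra (this is what the hypothesis denotes $A_\pp[I_\pp t]$). Consequently, by the very definition of local regularity,
$$\reg_\pp(\O_\tx) = \reg\big(A_{(\pp)}[I_{(\pp)}t]\big) = 0 \quad \text{for every } \pp \in X,$$
the last equality being the hypothesis. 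Taking the supremum over all $\pp \in X$ gives $\reg_\pi = \sup_{\pp \in X}\reg_\pp(\O_\tx) \le 0$. Combining this with the first inequality produces the chain $a^*_\pi \le \reg_\pi \le 0$.

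The only genuinely delicate point, and the step I would write out most carefully, is the identification in the previous paragraph of the local module of the \emph{structure} sheaf $\O_\tx$ with the homogeneous localization $A_{(\pp)}[I_{(\pp)}t]$ of the Rees algebra, together with reconciling the paper's notation $A_\pp[I_\pp t]$ with $A_{(\pp)}[I_{(\pp)}t] = \O_{X,\pp}[\I_\pp t]$. Once this dictionary between the fiber invariants of $\O_\tx$ and the regularity of the local Rees algebras is in place, the conclusion is immediate: no cohomological vanishing argument beyond the formal inequality $a^* \le \reg$ is required. This is also consistent with the companion bound $a^*_\pi \ge -1$ of Lemma \ref{lem.lCM}, so that under the present hypothesis one in fact has $-1 \le a^*_\pi \le 0$.
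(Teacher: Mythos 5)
Your proposal is correct and follows essentially the same route as the paper's own (very terse) proof: identify the local graded module $(\O_\tx)_{\Spec \O_{X,\pp}}$ with the homogeneous localization of the Rees algebra, use the hypothesis to get $\reg_\pp(\O_\tx)\le 0$ at every $\pp$, take the supremum, and note that $a^*_\pi \le \reg_\pi$ is formal since $a^i_\pi \le a^i_\pi + i$ for $i \ge 0$. The one point you flag as delicate --- reconciling $A_\pp[I_\pp t]$ with $A_{(\pp)}[I_{(\pp)}t]$ --- is exactly where the paper writes $\R_{\Spec \O_{X,\pp}} = A_\pp[I_\pp t]\otimes_{A_\pp} A_{(\pp)}$ and invokes the fact that local cohomology commutes with localization (more precisely, the two Rees algebras differ by a flat base change under which the graded local cohomology modules, hence the regularity bound $\reg \le 0$, are preserved), so your ``dictionary'' is the same step the paper performs.
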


\begin{proof} Let $R = A[It]$ and let $\R = \bigoplus_{n \ge 0}\I^n$, where $\I$ is the ideal sheaf of $I$ on $X$. For any point $\pp \in X$, we have that $\O_{X,\pp} = A_{(\pp)}$ is the homogeneous localization of $A$ at $\pp$. Thus,
$$\R_{\Spec \O_{X,\pp}} = R \otimes_A A_{(\pp)} = A_\pp[I_\pp t] \otimes_{A_\pp} A_{(\pp)}.$$
The assertion follows from the definition and $a^*$-invariant and regularity, the fact that local cohomology commutes with localization.
\end{proof}

\begin{example} \label{ex.localreg0}
Let $A$ be a standard graded algebra over a ring $A_0$. Let $I$ be a homogeneous ideal such that for any $\pp \in \Proj A$, $I_\pp$ is generated by a $d$-sequence (see \cite{Huneke} for more details on $d$-sequences). This condition is satisfied, for instance, if $I$ is a locally complete intersection. Then, by \cite[Corollary 5.2]{Tr2}, for every $\pp \in \Proj A$, we have $\reg (A_\pp[I_\pp t]) = 0$. Hence, we are in the setting of Lemma \ref{lem.Prop64} and, in this case, we have $a^*_\pi \le \reg_\pi \le 0$.
\end{example}

The following theorem is the main result of this section, that establishes important properties of fiber $a^*$-invariant and regularity. This result plays the key role in our study of stability indexes carried out in the next section.

\begin{theorem}\label{pro.higherimage}
Assume Setup \ref{setup1} and suppose further that $X$ is a projective scheme. Let $\O_X(1)$ be a very ample invertible sheaf on $X$, and set $a = a^*_\pi(\F)$.
\begin{enumerate}
\item[(1)] For all $n > a$, we have
	$$\pi_* \F(n) = \F_n \text{ and } R^j\pi_* \F(n) = 0 \ \forall \ j > 0.$$	
\item[(2)] If $r_\pi(\F) \le 1$ then for all $q \ge \max\{\reg(\F_{a}), \reg^\pi_0(\F(a))\}$,
$$H^0(Y, \F(a) \otimes_{\O_Y} \pi^* \O_X(q)) \not= H^0(X, \F_{a}(q)).$$
\item[(3)] If $r_\pi(\F) \ge 2$ then for all $q \ge \reg^\pi_*(\F(a))$,
$$H^{r_\pi(\F)-1}(Y, \F(a) \otimes_{\O_Y} \pi^* \O_X(q)) \not= 0.$$
\end{enumerate}
\end{theorem}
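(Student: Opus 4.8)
The plan is to reduce everything to the affine-local and Serre--Grothendieck pictures and then track cohomology through the morphism $\pi$ degree by degree. The key object is the graded sheaf $\F_\bullet = \bigoplus_n \F_n$, where $\F_n$ is built by gluing the degree-$n$ pieces $\widetilde{(\F_\pp)_n}$. The three parts correspond to three regimes governed by $a = a^*_\pi(\F)$ and $r = r_\pi(\F)$: above the threshold (part (1)), and at the threshold when the extremal cohomology lives in degree $0$ (part (2)) or in degree $\ge 2$ (part (3)). First I would establish part (1), since parts (2) and (3) are nonvanishing statements \emph{at} degree $a$ that reuse the same machinery at the critical value.

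\textbf{Part (1): vanishing above the threshold.} Since $X$ is Noetherian, cover it by finitely many affine opens $U = \Spec A$, so $\R_U$ is a graded $A$-algebra and $\F_U$ a finitely generated graded $\R_U$-module. For $\pp \in U$, the Serre--Grothendieck correspondence applied to the projective morphism $\Proj \R_\pp \to \Spec \O_{X,\pp}$ relates $R^j\pi_*\F(n)$ restricted over $\Spec \O_{X,\pp}$ to $H^{j+1}_{\R_{\pp+}}(\F_\pp)_n$ for $j \ge 1$, and $\pi_*\F(n)$ to $(\F_\pp)_n$ modulo the $H^0$ and $H^1$ local cohomology. Because $a^*_\pi(\F) = a$, we have $[H^i_{\R_{\pp+}}(\F_\pp)]_n = 0$ for all $i \ge 0$, all $\pp$, and all $n > a$. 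This forces the correction terms $H^0_{R_+}, H^1_{R_+}$ to vanish in degree $n > a$, giving $\pi_*\F(n) = \F_n$ on each $U$; these local identifications glue because they are canonical. Likewise the higher local cohomology vanishing gives $R^j\pi_*\F(n) = 0$ for $j > 0$ and $n > a$. The main subtlety here is checking that the Serre--Grothendieck sequence is compatible with localization and restriction to opens, which follows from the fact that local cohomology commutes with the flat base change $A \to \O_{X,\pp}$, exactly as in the proof of Lemma~\ref{lem.integer}.

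\textbf{Parts (2) and (3): nonvanishing at the threshold.} Here $n = a$ is the \emph{extremal} degree, so some $[H^{r}_{\R_{\pp+}}(\F_\pp)]_a \ne 0$ for a suitable $\pp$ and $r = r_\pi(\F)$, but all lower local cohomology modules vanish in degree $a$. When $r \le 1$ (part (2)), this means $H^0$ of $\F(a)$ over a fiber can be strictly larger than the global sections of $\F_a$: the discrepancy is precisely the nonvanishing $H^0_{R_+}$ or $H^1_{R_+}$ term in degree $a$. To convert the fiberwise statement into the global comparison $H^0(Y,\F(a)\otimes\pi^*\O_X(q)) \ne H^0(X,\F_a(q))$, I would invoke the projection formula $R^0\pi_*(\F(a)\otimes\pi^*\O_X(q)) = (\pi_*\F(a))\otimes\O_X(q)$, and use Lemma~\ref{lem.global} together with the hypotheses $q \ge \reg(\F_a)$ and $q \ge \reg^\pi_0(\F(a))$ to ensure both relevant sheaves are globally generated, so that twisting high enough detects the degree-$a$ discrepancy at the level of global sections without introducing cancellation. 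For part (3), with $r \ge 2$, the extremal local cohomology $H^r_{\R_{\pp+}}$ corresponds under Serre--Grothendieck to $R^{r-1}\pi_*\F(a) \ne 0$; choosing $q \ge \reg^\pi_*(\F(a))$ guarantees via Lemma~\ref{lem.global} that $R^{r-1}\pi_*\F(a)\otimes\O_X(q)$ is globally generated and hence has a nonzero $H^0(X,-)$, which by the Leray spectral sequence for $\pi$ (whose lower-row terms vanish by part (1) in degrees above $a$, collapsing the relevant edge map) injects into $H^{r-1}(Y,\F(a)\otimes\pi^*\O_X(q))$, yielding the claimed nonvanishing.

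\textbf{Anticipated main obstacle.} The hard part will be the Leray spectral sequence bookkeeping in part (3): one must argue that the global section $H^0(X, R^{r-1}\pi_*\F(a)\otimes\O_X(q))$ survives to a genuine class in $H^{r-1}(Y,-)$ rather than being killed by an incoming or outgoing differential. This is where the choice $r = r_\pi(\F)$ as the \emph{minimal} index realizing the extremal $a$-invariant is essential: minimality ensures the lower higher-direct-images in the relevant range behave well, so that the edge homomorphism $E_2^{0,r-1} \to H^{r-1}$ is injective. Verifying this collapse carefully, and confirming that the regularity bound $\reg^\pi_*(\F(a))$ is exactly what is needed to make every intervening $R^j\pi_*$ term globally generated after twisting, is the technical crux of the argument.
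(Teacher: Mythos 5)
Your proposal is correct and follows essentially the same route as the paper's proof: affine-local reduction plus the Serre--Grothendieck correspondence for part (1), the global-generation comparison via Lemma~\ref{lem.global} and the projection formula for part (2), and the collapsed Leray spectral sequence with the edge map $E_2^{0,r-1} \to H^{r-1}(Y,-)$ for part (3). One small correction: the vanishing of the intermediate rows $R^j\pi_*\F(a)$ for $0 < j < r_\pi(\F)-1$ at the critical twist $a$ comes from the minimality of $r_\pi(\F)$ (and the bottom-row vanishing $H^i(X,\pi_*\F(a)\otimes\O_X(q))=0$ for $i>0$ from $q \ge \reg^\pi_0(\F(a))$), not from part (1), which only applies in degrees $n > a$ --- though you correctly identify exactly this mechanism in your closing paragraph.
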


\begin{proof} (1) For any point $\pp \in X$, let $Y_\pp = Y \times_X \Spec \O_{X,\pp}$. Then, $\O_{Y_\pp} = \O_Y \times_{\O_X} \O_{X,\pp}$ and $\F(n)\big|_{Y_\pp} = \F(n) \otimes_{\O_Y} \O_{Y_\pp} = \F(n) \otimes_{\O_X} \O_{X,\pp} = (\F \otimes_{\O_X} \O_{X,\pp})(n).$
For any $j \ge 0$, we have
\begin{align}
R^j \pi_* \F(n) \big|_{\Spec \O_{X,\pp}} = H^j(Y_\pp, \F(n)\big|_{Y_\pp})\!\widetilde{\quad \quad}. \label{eq.10}
\end{align}
The Serre-Grothendieck correspondence gives a short exact sequence
$$0 \rightarrow H^0_{\R_{\pp+}}(\F_\pp) \rightarrow \F_\pp \rightarrow \bigoplus_{n \in \ZZ} H^0(Y_\pp, \F(n)\big|_{Y_{\pp}}) \rightarrow H^i_{\R_{\pp+}}(\F_\pp) \rightarrow 0$$
and isomorphisms, for $i \ge 1$,
$$H^{i+1}_{\R_{\pp+}}(\F_\pp) \simeq \bigoplus_{n \in \ZZ} H^i(Y_\pp, \F(n)\big|_{Y_\pp}).$$
This, together with the definition of $a^*_\pi(\F)$, implies that for $n > a^*_\pi(\F)$,
\begin{align*}
H^0(Y_\pp, \F(n)\big|_{Y_\pp}) & = (\F_\pp)_n, \\
R^j \pi_* \F(n)\big|_{\Spec \O_{X,\pp}} & = 0 \text{ for } j > 0.
\end{align*}
These hold for any $\pp \in X$. Hence,
\begin{align*}
\pi_* \F(n)  = \F_n \text{ and } R^j \pi_* \F(n)  = 0 \ \forall \  j > 0.
\end{align*}

(2) Set $r = r_\pi(\F)$. By definition,
\begin{align}
\left\{\begin{array}{ll} \big[H^i_{\R_{\pp+}}(\F_{\pp})\big]_a  = 0 & \text{ for } i < r \text{ and any } \pp \in X, \\
\big[H^r_{\R_{\qq+}}(\F_{\qq})\big]_a  \not=  0 & \text{ for some } \qq \in X.\end{array}\right. \label{eq.nonvanish}
\end{align}

Since $r \le 1$, together with the Serre-Grothendieck correspondence, this gives
$$H^0(Y_\qq, \F(a)\big|_{Y_\qq}) \not= (\F_\qq)_a.$$
Therefore, by (\ref{eq.10}), we have
$\pi_* \F(a) \not= \F_a.$
Twisting by $\O_X(q)$, we get $\pi_* \F(a) \otimes_{\O_X} \O_X(q) \not= \F_a(q)$ for all $q \in \ZZ$.

By Lemma \ref{lem.global}, for all $q \ge \max\{\reg(\F_{a}), \reg^\pi_0(\F(a))\}$, both $\pi_* \F(a) \otimes_{\O_X} \O_X(q)$ and $\F_a(q)$ are generated by global sections. Thus, we obtain
$$H^0(X, \pi_* \F(a) \otimes_{\O_X} \O_X(q)) \not= H^0(X, \F_a(q))  \ \forall \ q \ge \max\{\reg(\F_{a}), \reg^\pi_0(\F(a))\}.$$
Moreover, by the projection formula, we have $\pi_* (\F(a) \otimes_{\O_Y} \pi^* \O_X(q)) = \pi_* \F(a) \otimes_{\O_X} \O_X(q)$. This implies that $H^0(Y,\F(a) \otimes_{\O_Y} \pi^* \O_X(q)) = H^0(X, \pi_* \F(a) \otimes_{\O_X} \O_X(q))$. Hence, the assertion follows.

(3) It follows from (\ref{eq.nonvanish}), the Serre-Grothendieck correspondence, and (\ref{eq.10}) that
\begin{align*}
R^j \pi_* \F(a)  = 0 \text{ for } 0 < j < r-1 \text{ and } R^{r-1} \pi_* \F(a)  \not= 0. \
\end{align*}
Thus, by the projection formula, we have
\begin{align}
\left\{ \begin{array}{lcl} R^j \pi_* (\F(a) \otimes_{\O_Y} \pi^* \O_X(q)) & = & 0 \text{ for } 0 < j < r-1, \\ R^{r-1} \pi_* (\F(a) \otimes_{\O_Y} \pi^* \O_X(q)) & \not=& 0. \end{array}\right.\label{eq.30}
\end{align}

By Lemma \ref{lem.global}, for $q \ge \reg^\pi_{r-1}(\F(a))$, $R^{r-1} \pi_* (\F(a) \otimes_{\O_Y} \pi^* \O_X(q)) = R^{r-1} \pi_* \F(a) \otimes_{\O_X} \O_X(q)$ is generated by global sections. Therefore, by (\ref{eq.30}), we deduce that for $q \ge \reg^\pi_{r-1}(\F(a))$,
\begin{align}
H^0(X, R^{r-1} \pi_* (\F(a) \otimes_{\O_Y} \pi^* \O_X(q))) \not= 0. \label{eq.not0}
\end{align}
Moreover, for $q \ge \reg^\pi_0 (\F(a))$, we have $H^i(X, \pi_* \F(a)) = 0$ for all $i > 0$. Hence, by considering the Leray spectral sequence
$$E^{i,j}_2 = H^i(X, R^j \pi_* (\F(a) \otimes_{\O_Y} \pi^* \O_X(q))) \Rightarrow H^{i+j}(Y, \F(a) \otimes_{\O_Y} \pi^* \O_X(q)),$$
(\ref{eq.30}) and (\ref{eq.not0}) imply that, for $q \ge \reg^\pi_{*}(\F(a))$,
$H^{r-1}(Y, \F(a) \otimes_{\O_Y} \pi^* \O_X(q)) \not= 0,$ and the result is proved.
\end{proof}

\begin{corollary}\label{cor.qCM}
Suppose that $X = \Proj A$ is equidimensional and the Rees algebra $\R = A[It]$ of $I$ is a Cohen-Macaulay ring. Let $\pi: \tx \rightarrow X$ be the blowing up of $X$ along the ideal sheaf of $I$, and let $\omega_\tx$ denote the canonical sheaf on the blowup $\tx$. Then, for $q \ge \reg\left((\omega_\tx)_1\right)$, we have
$$H^{\dim X}(\tx, \O_\tx(-1) \otimes_{\O_Y} \pi^* \O_X(q)) \not= 0.$$
\end{corollary}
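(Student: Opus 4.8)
The plan is to obtain this as a special case of the nonvanishing in Theorem~\ref{pro.higherimage}, applied to the dualizing sheaf $\F = \omega_\tx$; the two hypotheses, that $X$ is equidimensional and that $\R = A[It]$ is Cohen--Macaulay, are exactly what is needed to compute the fiber invariants of $\pi$ by duality. As a first step I would record that, since $\R$ is Cohen--Macaulay, $\O_\tx$ is locally Cohen--Macaulay over $X$ (the remark following Example~\ref{ex.determinant}), so that each fiber ring $\R_\pp = A_{(\pp)}[I_{(\pp)}t]$ is a Cohen--Macaulay graded $A_{(\pp)}$-algebra and $a^*_\pi(\O_\tx) = -1$ by Lemma~\ref{lem.lCM}. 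I would then identify the fiber restriction $(\omega_\tx)_\pp$ with the graded canonical module $\omega_{\R_\pp}$ and apply graded local duality on each fiber, expressing the local cohomology of $\omega_{\R_\pp}$ in terms of that of $\R_\pp$. Since the latter is already controlled by Cohen--Macaulayness, this determines both $a^*_\pi(\omega_\tx)$ and the index $r_\pi(\omega_\tx)$.

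With these fiber invariants in hand, I would feed $\omega_\tx$ into the relevant nonvanishing part of Theorem~\ref{pro.higherimage}, which produces a nonzero sheaf-cohomology group of $\omega_\tx(a^*_\pi(\omega_\tx)) \otimes_{\O_\tx} \pi^*\O_X(q)$ once $q$ exceeds the $\reg^\pi_*$ threshold appearing there. Two identifications then convert this into the asserted statement. First, Theorem~\ref{pro.higherimage}(1) gives $\pi_*(\omega_\tx(1)) = (\omega_\tx)_1$ as soon as $1 > a^*_\pi(\omega_\tx)$ and kills the higher direct images $R^j\pi_*$ in the relevant range, so the abstract threshold collapses to $\reg((\omega_\tx)_1)$. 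Second, I would use Grothendieck--Serre duality on the Cohen--Macaulay projective scheme $\tx$, of dimension $\dim X$, to trade the dualizing sheaf $\omega_\tx$ against $\O_\tx(-1)$, the inverse of the twisting sheaf $\O_\tx(1) = \I\O_\tx$, and to match the cohomological degree with $\dim X$; the factor $\pi^*\O_X(q)$ is carried along through the projection formula, exactly as in the proof of Theorem~\ref{pro.higherimage}.

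The main obstacle will be the duality bookkeeping rather than any single hard estimate. Concretely, I expect the delicate points to be: computing $a^*_\pi(\omega_\tx)$ and $r_\pi(\omega_\tx)$ precisely from the fiberwise duality, which requires tracking both the internal degree shift built into the canonical module and the gap between the irrelevant ideal $\R_{\pp+}$ and the full graded maximal ideal of $\R_\pp$; and checking that the Grothendieck--Serre isomorphism remains valid over the possibly non-field base $A_0$ and is compatible with the $\pi^*\O_X(q)$-twist, so that the degree and the twist produced by Theorem~\ref{pro.higherimage} line up with the $\dim X$ and the $\O_\tx(-1)$ in the claimed nonvanishing. Once these compatibilities are verified, the corollary should follow by direct substitution into Theorem~\ref{pro.higherimage}.
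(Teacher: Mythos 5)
Your overall instinct---combine the nonvanishing machinery of Theorem \ref{pro.higherimage} with duality---is the same as the paper's, but you apply the two ingredients in the wrong places, and the global duality step has a fatal sign problem. Writing $D = \dim X$, the paper applies the argument of Theorem \ref{pro.higherimage}(3) directly to $\F = \O_\tx$: Cohen--Macaulayness of $\R$ gives $a^*_\pi(\O_\tx) = -1$ by Lemma \ref{lem.lCM}, and since each $\R_\pp = A_{(\pp)}[I_{(\pp)}t]$ is a $(D+1)$-dimensional Cohen--Macaulay ring, $r_\pi(\O_\tx) = D+1$; the theorem then yields $H^{D}(\tx, \O_\tx(-1)\otimes \pi^*\O_X(q)) \not= 0$ for $q \ge \max\{\reg(\pi_*\O_\tx(-1)), \reg(R^{D}\pi_*\O_\tx(-1))\}$. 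Duality enters only \emph{locally, on each fiber ring}: $\pi_*\O_\tx(-1) = (\O_\tx)_{-1} = 0$ because $\R_\pp$ has no elements of degree $-1$, and the identification $\big[H^{D+1}_{\R_{\pp+}}(\R_\pp)\big]_{-1} \cong (K_{\R_\pp})_1$ glues to $R^{D}\pi_*\O_\tx(-1) = (\omega_\tx)_1$, which is what collapses the threshold to $\reg\left((\omega_\tx)_1\right)$. No global Serre duality on $\tx$ is used anywhere.

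Your route---feed $\omega_\tx$ into Theorem \ref{pro.higherimage} and then trade $\omega_\tx$ for $\O_\tx(-1)$ by Grothendieck--Serre duality---breaks at the trading step, because duality is contravariant in the sheaf argument: for an invertible sheaf $\L$ on a Cohen--Macaulay projective $\tx$ over a field, $H^i(\tx, \omega_\tx \otimes \L)$ pairs with $H^{\dim \tx - i}(\tx, \L^{-1})$, so the twist $\pi^*\O_X(q)$ that Theorem \ref{pro.higherimage} hands you becomes $\pi^*\O_X(-q)$ after dualizing; the projection formula cannot repair an inverted twist. Concretely, the dual of the target group $H^{D}(\tx, \O_\tx(-1)\otimes\pi^*\O_X(q))$ is $H^0(\tx, \omega_\tx(1)\otimes\pi^*\O_X(-q))$, i.e., global sections at a large \emph{negative} twist, and Theorem \ref{pro.higherimage}---whose thresholds are lower bounds on a positive twist, exploited through global generation (Lemma \ref{lem.global})---is structurally incapable of producing such a statement. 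Two further problems compound this: the base $A_0$ is an arbitrary Noetherian ring, where the clean pairing you invoke is not available (the paper's purely fiberwise use of duality avoids this entirely); and your threshold bookkeeping is off, since applying Theorem \ref{pro.higherimage}(3) to $\omega_\tx$ gives the bound $\reg^\pi_*\big(\omega_\tx(a^*_\pi(\omega_\tx))\big)$, which is governed by the higher direct image $R^{r_\pi(\omega_\tx)-1}\pi_*$ of $\omega_\tx(a^*_\pi(\omega_\tx))$ and does not collapse to $\reg\left((\omega_\tx)_1\right)$ via part (1), which only controls $\pi_*$ of twists strictly above $a^*_\pi(\omega_\tx)$. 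The repair is to run the nonvanishing argument on $\O_\tx$ itself and reserve duality for the fiberwise identification $R^{D}\pi_*\O_\tx(-1) = (\omega_\tx)_1$, as the paper does.
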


\begin{proof} Let $D = \dim X$. Since $\R$ is Cohen-Macaulay, $\O_\tx$ is locally Cohen-Macaulay over $X$ and, by Lemma \ref{lem.lCM}, we have $a^*_\pi = -1$.
	
Observe that for any closed point $\pp \in X$, $\Spec \O_{X,\pp}$ is an open subset of $X$, and so it is of dimension $D$. It follows that $\R_\pp = A_{(\pp)}[I_{(\pp)}t]$ is a $(D+1)$-dimensional Cohen-Macaulay ring. Thus, $r_\pi(\O_\tx) = D+1$. By the same arguments as that of Theorem \ref{pro.higherimage}.(3), it can be deduced that for $q \ge \max\{\reg(\pi_* \O_\tx(-1)), \reg(R^D \pi_* \O_\tx(-1))\}$,
$$H^{D}(\tx, \O_\tx(-1) \otimes_{\O_Y} \pi^* \O_X(q)) \not= 0.$$

It remains to show that $\pi_* \O_\tx(-1) = 0$ and $R^D \pi_* \O_\tx(-1) = (\omega_\tx)_1$. Indeed, as in the proof of Theorem \ref{pro.higherimage}.(2), we have $\pi_* \O_\tx(-1) = (\O_\tx)_{-1}$. Since for every $\pp \in X$, $\R_\pp = A_{(\pp)}[I_{(\pp)}t]$ has no elements of degree $(-1)$, we get $(\O_\tx)_{-1} = 0$.

Let $K_\R$ be the canonical module of $\R$. Observe further that for any closed point $\pp \in X$, $H^{D+1}_{\R_{\pp+}}(\R_\pp)^\vee = K_{\R_\pp}$. Therefore, $\left[H^{D+1}_{\R_{\pp+}}(\R_{\pp})\right]_{-1} \cong (K_{\R_\pp})_1.$ Together with the Serre-Grothendieck correspondence, this implies that
$$R^D \pi_* \O_\tx(-1)\big|_{\Spec \O_{X,\pp}} = \widetilde{(K_{\R_\pp})_1}.$$
Since $K_{\R_\pp} = (K_\R)_\pp$ for all $\pp \in X$, we have $R^D \pi_* \O_\tx(-1) = (\omega_\tx)_1$, and the assertion is proven.
\end{proof}

\begin{corollary}\label{cor.qGor}
Assume the same hypotheses of Corollary \ref{cor.qCM}, and suppose further that the Rees algebra $\R = A[It]$ is Gorenstein. Then, for $q \ge \reg (\O_X)$, we have
$$H^{\dim X}(\tx, \O_\tx(-1) \otimes_{\O_Y} \pi^* \O_X(q)) \not= 0.$$
\end{corollary}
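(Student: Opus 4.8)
The plan is to deduce Corollary \ref{cor.qGor} from Corollary \ref{cor.qCM}: it suffices to show that, once $\R = A[It]$ is Gorenstein, the sheaf $(\omega_\tx)_1 = R^{\dim X}\pi_*\O_\tx(-1)$ governing the threshold in Corollary \ref{cor.qCM} is isomorphic to $\O_X$, so that $\reg((\omega_\tx)_1) = \reg(\O_X)$ and the stated bound follows at once.

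First I would invoke the local description obtained in the proof of Corollary \ref{cor.qCM}, namely $R^{\dim X}\pi_*\O_\tx(-1)\big|_{\Spec\O_{X,\pp}} = \widetilde{(K_{\R_\pp})_1}$ for each closed point $\pp \in X$, where $\R_\pp = A_{(\pp)}[I_{(\pp)}t]$ carries the $t$-grading and $K_{\R_\pp}$ is its canonical module. Since $\R$ is Gorenstein, every homogeneous localization $\R_\pp$ is Gorenstein, so $K_{\R_\pp}$ is free of rank one: $K_{\R_\pp} \cong \R_\pp(c)$ for a single $t$-degree shift $c$. Moreover these pointwise isomorphisms come from one global identification $K_\R \cong \R(\mathbf a)$ of bigraded $\R$-modules, where $\mathbf a = (a_1, a_2)$ records the shifts in the $A$- and $t$-gradings.

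Next I would pin down the $t$-shift. Applying Lemma \ref{lem.aRees} to the Cohen-Macaulay ring $\R_\pp$ gives $a^*(\R_\pp) = -1$; combined with the standard relation between the $a$-invariant of a graded Gorenstein ring and the initial degree of its canonical module, this forces $c = a_2 = -1$. Hence the $t$-degree-one strand of $K_\R$ is the $t$-degree-zero strand of $\R$, that is $A$ with its $A$-grading shifted by $a_1$, and consequently $(\omega_\tx)_1 \cong \O_X(a_1)$. To reach the clean bound $\reg(\O_X)$ I must verify that the remaining $A$-direction shift vanishes, $a_1 = 0$, so that $(\omega_\tx)_1 \cong \O_X$; with this in hand Corollary \ref{cor.qCM} immediately gives $H^{\dim X}(\tx, \O_\tx(-1)\otimes_{\O_Y}\pi^*\O_X(q)) \ne 0$ for all $q \ge \reg(\O_X)$.

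I expect the vanishing $a_1 = 0$ to be the crux. The pointwise computation $(K_{\R_\pp})_1 = A_{(\pp)}$ only detects that $(\omega_\tx)_1$ is invertible, since every line bundle is locally trivial; the actual twist is encoded in the gluing, i.e. in the bigraded structure of $K_\R$, and must be controlled globally. This is the sole place where the Gorenstein hypothesis is used beyond the Cohen-Macaulayness already assumed in Corollary \ref{cor.qCM}; I would extract $a_1 = 0$ from the bigraded structure of $K_\R \cong \R(\mathbf a)$, for instance by comparing with the generic behavior of $\pi$, which is an isomorphism off the center of the blowup.
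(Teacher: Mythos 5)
You correctly reduced the corollary to computing $(\omega_\tx)_1$ and feeding it into Corollary \ref{cor.qCM}, and your determination of the $t$-direction shift $a_2 = -1$ via Lemma \ref{lem.aRees} is exactly what the paper does (it writes $K_\R = \R(a^{\dim \R}(\R)) = \R(-1)$). The genuine gap is the step you flag yourself and then leave open: the vanishing $a_1 = 0$ of the shift in the $A$-direction. This step is not merely unproven in your write-up; in the generality of your set-up it is \emph{false}, so it cannot be ``extracted from the generic behavior of $\pi$'' --- generic triviality only shows that $(\omega_\tx)_1$ is invertible on a dense open set, and every twist $\O_X(a_1)$ is generically trivial, so that comparison detects nothing. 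Concretely, take $A = k[x,y,z]$ and $I = (x,y)$, so $d = 1$, $X = \PP^2$, and $\tx$ is the blowup of $\PP^2$ at a point. Then $\R \cong S/(xv - yu)$ with $S = k[x,y,z,u,v]$ carrying the bigrading of Remark \ref{rmk.grading} ($\deg x = \deg y = \deg z = (1,0)$, $\deg u = \deg v = (0,1)$, since $f_it \in \R_{(0,1)}$); this is a complete intersection, hence Gorenstein, and the bigraded canonical module is $K_\R \cong \R\big((1,1)-(3,2)\big) = \R(-2,-1)$. So $a_2 = -1$ as you predicted, but $a_1 = -2$: the $t$-degree-one strand is $(K_\R)_{(*,1)} \cong A(-2)$, which sheafifies to $\O_{\PP^2}(-2) \not\cong \O_X$ (geometrically, $\omega_\tx = \O_\tx(-2,-1)$ on the Hirzebruch surface). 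By contrast, in an $\mm$-primary example such as $I = (x^2,y^2) \subseteq k[x,y]$ one finds $K_\R \cong \R\big((2,1)-(2,2)\big) = \R(0,-1)$, i.e.\ $a_1 = 0$. So $a_1$ genuinely depends on $I$, and no general argument of the kind you sketch can force it to vanish.

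For comparison, the paper's proof is two lines: it writes $K_\R = \R(-1)$ with a single $\ZZ$-graded shift and reads off $(\omega_\tx)_1 = \widetilde{\R(-1)_1} = \widetilde{\R_0} = \O_X$ --- that is, it performs, without comment, exactly the identification whose $A$-graded content you isolated as the crux. You were right that this is where all the content of the corollary lies, and your analysis is sharper than the paper's on this point; but as a proof attempt yours is incomplete precisely there, and the example above shows the intended completion is unavailable at this level of generality. A repaired argument must either restrict to hypotheses under which $a_1 = 0$ can actually be verified (for instance the $\mm$-primary setting relevant to Corollary \ref{cor.mprimary}), or else keep track of the twist and replace the threshold $\reg(\O_X)$ by $\reg(\O_X(a_1)) = \reg(\O_X) - a_1$.
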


\begin{proof}
Since $\R$ is Gorenstein, we have $K_\R = \R(a^{\dim \R}(\R))$. It then follows from Lemma \ref{lem.aRees} that $K_\R = \R(-1)$. Thus, in this case, $(\omega_\tx)_1 = \widetilde{\R(-1)_1} = \widetilde{\R_0} = \O_X$, and the result follows from Corollary \ref{cor.qCM}.
\end{proof}

\begin{example}
	Let $A = \CC[[x,y]]$ and let $I = (x^2,xy)$. Then $\R = A[It]$ is Gorenstein (cf. \cite{TVZ}). It is also easy to see that $\reg (\O_X) = 0$ in this case. Thus, for $q \ge 0$, we have
	$$H^1(\tx, \O_\tx(-1) \otimes_{\O_\tx} \pi^* \O_X(q)) \not =0.$$
\end{example}


\section{$a^*$-invariant and regularity of powers of ideals} \label{sec.reg}

In this section, we will prove our main result on the $a^*$-invariant and regularity of powers of a homogeneous ideal. We shall begin by recalling our setup throughout this section.

\begin{setup} \label{setup2}
Let $A$ be a standard graded algebra over a ring $A_0$, and let $I \subseteq A$ be a homogeneous ideal generated by $(m+1)$ forms $f_0, \dots, f_m$ of degree $d > 0$. Let $X = \Proj A \subseteq \PP^n$ and let $\varphi: X \dasharrow \PP^m$ be the rational map defined by $[f_0:\dots:f_m]$. Let $\ix$ be (the closure of) the image of $\varphi$. Then, $\ix = \Proj A_0[f_0t, \dots, f_mt]$. Let $\tx \subseteq \PP^n \times \PP^m$ be (the closure of) the graph of $\varphi$. Let $\pi: \tx \rightarrow X$ and $\phi: \tx \rightarrow \ix$ be the natural projection maps. As discussed in Lemma \ref{lem.biproj}, $\pi: \tx \rightarrow X$ is the blowing-up morphism of $X$ along the ideal sheaf of $I$.
\end{setup}

\begin{remark} \label{rmk.grading}
Let $\R = A[It]$ be the Rees algebra of $I$. Then, $\R$ is naturally equipped with a bi-graded structure given by setting $\deg_\R(x) = (\deg_A(x), 0)$ for all $x \in A$ and $\deg_\R(f_it) = (d,1)$ for all $i = 0, \dots, m$. That is,
$\R = \bigoplus_{p,q \in \ZZ} \R_{(p,q)}$
where
$$\R_{(p,q)} = (I^q)_{p+dq}t^q.$$
Under this bi-graded structure, we can define
$$\O_\tx(p,q) = \pi^* \O_X(p) \otimes_{\O_\tx} \phi^* \O_{\ix}(q).$$

For $p,q \in \ZZ$, let $\R_{(*,q)} = \bigoplus_{p' \in \ZZ} \R_{(p',q)} \text{ and } \R_{(p,*)} = \bigoplus_{q' \in \ZZ} \R_{(p,q')}.$ Then, these give $\R$ natural $\ZZ$-graded structures as an algebra over $A = \R_{(*,0)}$ and over $B=A_0[f_0t, \dots, f_mt] = \R_{(0,*)}$, respectively. Furthermore, $\R_{(*,q)}$ is a graded $A$-module, and $\R_{(p,*)}$ is a graded $B$-module. Thus, we can construct coherent sheaves associated to $\R_{(*,q)}$ and $\R_{(p,*)}$ on $X$ and $\ix$, respectively.
\end{remark}

\begin{lemma}\label{lem.grading}
Let $\L = \widetilde{\R_{(*,1)}}\O_\tx$ and let $\M = \widetilde{\R_{(1,*)}}\O_\tx$. Then, $\L = \O_\tx(0,1)$ and $\M = \O_\tx(1,0)$ are twisting sheaves on $\tx$ when $\O_\tx$ is viewed as a sheaf of graded $\O_X$-algebras and as a sheaf of graded $\O_\ix$-algebras, respectively.
\end{lemma}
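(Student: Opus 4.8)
The plan is to read off each of $\L$ and $\M$ as the relative twisting sheaf of a relative-$\Proj$ presentation of $\tx$ and then to identify that twisting sheaf through the global sections it carries. By Lemma \ref{lem.biproj}, $\tx = \text{Bi-Proj }\R$ for $\R = A[It]$, and the two projections $\pi$ and $\phi$ are the relative $\Proj$ maps obtained by regrading $\R$ by its $q$-grading into the sheaf of $\O_X$-algebras $\bigoplus_{q\ge 0}\widetilde{\R_{(*,q)}}$, respectively by its $p$-grading into the sheaf of $\O_\ix$-algebras $\bigoplus_{p\ge 0}\widetilde{\R_{(p,*)}}$. The two claims are exchanged by swapping these roles, so it suffices to carry out one in detail.

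For $\L$: since $\R = A[It]$ is generated over $A = \R_{(*,0)}$ by $\R_{(*,1)} = It$ in $q$-degree one, the sheaf of $\O_X$-algebras $\bigoplus_q\widetilde{\R_{(*,q)}}$ is generated in degree one; hence its relative twisting sheaf $\L = \widetilde{\R_{(*,1)}}\O_\tx$ is invertible and is the image of the canonical surjection $\pi^*\widetilde{\R_{(*,1)}} \twoheadrightarrow \L$. Because $I = (f_0,\dots,f_m)$ gives $It = \sum_i A f_i t$, the module $\widetilde{\R_{(*,1)}}$ is generated over $\O_X$ by the images of $f_0t,\dots,f_mt$, so $\L$ is globally generated by the corresponding sections $s_0,\dots,s_m$, which have no common zero on $\tx$. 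I would then observe that the morphism $[s_0:\dots:s_m]\colon \tx \to \PP^m$ is, by the construction of $\ix = \Proj B$ with $B = A_0[f_0t,\dots,f_mt] = \R_{(0,*)}$ and of the graph, exactly $\phi$, and that it pulls $\O_\ix(1)$ back to $\L$. Since $\O_\tx(0,1)$ is defined as $\phi^*\O_\ix(1)$ in Remark \ref{rmk.grading}, this yields $\L = \O_\tx(0,1)$.

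For $\M$ I would run the mirror-image argument, now invoking that $A$ is standard graded: the identity $A_p = (A_1)^p$ forces $\R_{(p,*)} = (\R_{(1,*)})^p$ in the algebra $\bigoplus_p\R_{(p,*)}$, so $\bigoplus_p\widetilde{\R_{(p,*)}}$ is generated in degree one over $B$, and $\R_{(1,*)} = A_1\cdot B$ shows that $\widetilde{\R_{(1,*)}}$ is generated over $\O_\ix$ by the images of the linear forms $x_0,\dots,x_n \in A_1 = \R_{(1,0)}$. Consequently $\M = \widetilde{\R_{(1,*)}}\O_\tx$ is globally generated by the sections arising from $x_0,\dots,x_n$, the morphism they define is $\pi\colon \tx \to X \subseteq \PP^n$, and therefore $\M = \pi^*\O_X(1) = \O_\tx(1,0)$.

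The step I expect to require the most care is the passage from ``$\L$ is generated by the images of $f_0t,\dots,f_mt$'' to ``the morphism these sections define is exactly $\phi$ and $\L = \phi^*\O_\ix(1)$'': this demands matching the abstract relative $\Proj$ with the concrete graph $\tx \subseteq \PP^n\times\PP^m$ and its bi-Proj twisting sheaves. In particular one must be careful not to conflate $\L$ with the blowup twisting sheaf $\I\O_\tx$ of Lemma \ref{lem.biproj}: the two differ by the twist $\pi^*\O_X(d)$, precisely because the Rees bigrading assigns $f_it$ the bidegree $(0,1)$ rather than recording its $A$-degree $d$. Once this identification is pinned down, the remaining points --- generation in degree one and the module generation of $\widetilde{\R_{(*,1)}}$ and $\widetilde{\R_{(1,*)}}$ by the $f_it$ and the $x_j$ --- are routine consequences of $\R$ being the Rees algebra of $I$ and of $A$ being standard graded.
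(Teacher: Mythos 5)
Your proposal is correct, and it is in substance the paper's own argument spelled out in full: the paper disposes of the lemma in one line as ``immediate from the bi-graded and $\ZZ$-graded structures of $\R$,'' and your proof simply unpacks that, presenting $\tx$ as relative $\Proj$ over $X$ (grading by $q$) and over $\ix$ (grading by $p$), checking generation in degree one, and identifying each twisting sheaf with $\phi^*\O_\ix(1)$ resp.\ $\pi^*\O_X(1)$ via its globally generating sections $f_0t,\dots,f_mt$ and $x_0,\dots,x_n$. You also correctly isolate the one genuinely delicate point — that under the normalized bigrading $\R_{(p,q)} = (I^q)_{p+dq}t^q$ the element $f_it$ sits in bidegree $(0,1)$, so $\L = \I\O_\tx \otimes \pi^*\O_X(d)$ differs from the blowup twisting sheaf $\I\O_\tx$ of Lemma \ref{lem.biproj} by the twist $\pi^*\O_X(d)$ — which is exactly the identification the paper's appeal to Remark \ref{rmk.grading} takes for granted.
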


\begin{proof} The statement follows immediately from the described bi-graded and $\ZZ$-graded structures of $\R$ in Remark \ref{rmk.grading}.
\end{proof}

\begin{lemma} \label{lem.large}
	Let $A$ be a standard graded ring over a Noetherian ring $A_0$ and let $I \subseteq A$ be a homogeneous ideal generated in degree $d > 0$.
	\begin{enumerate}
		\item Let $q > a^*_\pi$. Then, for any $p \in \ZZ$, we have
		$$\pi_* \O_\tx(p,q) = \widetilde{\R_{(*,q)}}(p) \text{ and } R^j \pi_* \O_\tx(p,q) = 0 \ \forall \ j > 0.$$
		\item Let $p > a^*_\phi$. Then, for any $q \in \ZZ$, we have
		$$\phi_* \O_\tx(p,q) = \widetilde{\R_{(p,*)}}(q) \text{ and } R^j \phi_* \O_\tx(p,q) = 0 \ \forall \ j > 0.$$
	\end{enumerate}
\end{lemma}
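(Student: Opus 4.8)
The plan is to deduce both statements from the single pushforward/vanishing statement in Theorem \ref{pro.higherimage}.(1), applied once to $\pi$ and once to $\phi$, with the twist along the base absorbed by the projection formula. By Lemma \ref{lem.biproj} we have $\tx = \text{Bi-Proj } A[It]$, so by Remark \ref{rmk.grading} the scheme $\tx$ carries two relative-$\Proj$ structures: as $\Proj$ over $X = \Proj A$ of the sheaf of graded $\O_X$-algebras coming from the $q$-grading of $\R$, with twisting sheaf $\O_\tx(0,1)$; and as $\Proj$ over $\ix = \Proj B$ of the sheaf of graded $\O_\ix$-algebras coming from the $p$-grading, with twisting sheaf $\O_\tx(1,0)$ (Lemma \ref{lem.grading}). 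Each places us in Setup \ref{setup1} with $\F = \O_\tx$ over a projective base.

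First I would prove (1). In the $\Proj$-over-$X$ structure we have $a^*_\pi(\O_\tx) = a^*_\pi$, and $\O_\tx$ twisted by the $q$-th power of $\O_\tx(0,1)$ is exactly $\O_\tx(0,q)$. Hence Theorem \ref{pro.higherimage}.(1) gives, for every $q > a^*_\pi$,
$$\pi_*\O_\tx(0,q) = (\O_\tx)_q \text{ and } R^j\pi_*\O_\tx(0,q) = 0 \ \forall\ j > 0,$$
where the glued sheaf $(\O_\tx)_q$ is, by the local description of Lemma \ref{lem.biproj} together with Remark \ref{rmk.grading}, the coherent sheaf on $X$ associated to the graded $A$-module $\R_{(*,q)}$, i.e. $(\O_\tx)_q = \widetilde{\R_{(*,q)}}$. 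To pass to arbitrary $p$, I use $\O_\tx(p,q) = \O_\tx(0,q) \otimes_{\O_\tx} \pi^*\O_X(p)$ with $\O_X(p)$ invertible (as $A$ is standard graded); the projection formula then yields
$$R^j\pi_*\O_\tx(p,q) = R^j\pi_*\O_\tx(0,q) \otimes_{\O_X} \O_X(p),$$
which gives $\pi_*\O_\tx(p,q) = \widetilde{\R_{(*,q)}}(p)$ and the claimed vanishing for $j > 0$.

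Part (2) is entirely symmetric. I would run the same argument in the $\Proj$-over-$\ix$ structure, applying Theorem \ref{pro.higherimage}.(1) to $\F = \O_\tx$ with base $\ix$ and twisting sheaf $\O_\tx(1,0)$. Since the $p$-th twist of $\O_\tx$ by $\O_\tx(1,0)$ is $\O_\tx(p,0)$ and $a^*_\phi(\O_\tx) = a^*_\phi$, we obtain for $p > a^*_\phi$ that $\phi_*\O_\tx(p,0) = (\O_\tx)_p = \widetilde{\R_{(p,*)}}$ and $R^j\phi_*\O_\tx(p,0) = 0$ for $j > 0$. Writing $\O_\tx(p,q) = \O_\tx(p,0) \otimes_{\O_\tx} \phi^*\O_\ix(q)$ with $\O_\ix(q)$ invertible (as $B = A_0[f_0t,\dots,f_mt]$ is generated in degree one), the projection formula once more absorbs the twist and delivers $\phi_*\O_\tx(p,q) = \widetilde{\R_{(p,*)}}(q)$ together with the vanishing of the higher direct images.

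The step I expect to require the most care is the identification of the glued sheaf $(\O_\tx)_q$ with $\widetilde{\R_{(*,q)}}$ (and $(\O_\tx)_p$ with $\widetilde{\R_{(p,*)}}$): one must reconcile the grading convention built into the twisting sheaves of Lemma \ref{lem.grading} with the $A$-module (resp. $B$-module) gradings of $\R_{(*,q)}$ and $\R_{(p,*)}$ from Remark \ref{rmk.grading}. Concretely, on a standard chart $D_+(f)$ the degree-$q$ part of the local Rees algebra is $(I^q)_{(f)}$, and matching this to the sheafification of $\R_{(*,q)} \cong I^q(dq)$ accounts precisely for the $\pi^*\O_X(d)$-twist hidden in $\O_\tx(0,1)$; verifying that these shifts are compatible across charts is the only genuinely non-formal point, and it is exactly what Lemmas \ref{lem.biproj} and \ref{lem.grading} are arranged to supply.
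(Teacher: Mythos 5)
Your proposal is correct and takes essentially the same route as the paper's own proof: both apply Theorem \ref{pro.higherimage}.(1) once to $\pi$ with the twisting sheaf $\O_\tx(0,1)$ and once to $\phi$ with $\O_\tx(1,0)$, identifying the glued graded pieces $(\O_\tx)_q = \widetilde{\R_{(*,q)}}$ and $(\O_\tx)_p = \widetilde{\R_{(p,*)}}$ via homogeneous localization at points of the base. Your explicit use of the projection formula to absorb the twists $\pi^*\O_X(p)$ and $\phi^*\O_\ix(q)$, and your care about the $\pi^*\O_X(d)$-shift hidden in $\O_\tx(0,1)$, only spell out steps the paper leaves implicit in its appeal to the very ample sheaves $\O_X(1)$ and $\O_\ix(1)$.
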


\begin{proof} Observe that for each $\pp \in X = \Proj A$, $\O_{X,\pp} = A_{(\pp)}$ is the homogeneous localization of $A$ at $\pp$. Thus, $(\O_\tx)_\pp = A_{(\pp)}[I_{(\pp)}t]$. This implies that $[(\O_\tx)_\pp]_q = I_{(\pp)}^qt^q = (I^q)_{(\pp)} t^q$ for all $q \in \ZZ$. Therefore, $(\O_\tx)_q = \widetilde{\R_{(*,q)}}$. Therefore, (1) follows by applying Theorem \ref{pro.higherimage} to $\pi$, the twisting sheaf $\L = \O_\tx(0,1)$ on $\tx$, and the very ample sheaf $\O_X(1)$ on $X$. In a similar fashion, (2) follows by applying Theorem \ref{pro.higherimage} to $\phi$, the twisting sheaf $\M = \O_\tx(1,0)$ on $\tx$, and the very ample sheaf $\O_\ix(1)$ on $\ix$.
\end{proof}

We are now ready to prove the main result of this paper.

\begin{theorem} \label{thm.main}
	Let $A$ be a standard graded algebra over $A_0$ and let $I \subseteq A$ be a homogeneous ideal generated in degree $d > 0$. Let $\pi: \widetilde{X} \rightarrow X$ be the blowup of $X$ along the ideal sheaf of $I$ and let $\phi: \widetilde{X} \rightarrow \overline{X}$ be the natural projection onto its second coordinate.
\begin{enumerate}
\item For all $q > \max \left\{a^*_\pi, a^*\left(\R_{(a^*_\phi+1,*)}\right)\right\}$, we have
	$$a^*(I^q) \le dq + a^*_\phi.$$
\item For all $q \ge \max\left\{a^*_\pi+1, \reg\left(\widetilde{\R_{(a^*_\phi,*)}}\right), \reg^\phi_*(\O_\tx(a^*_\phi,0))\right\},$ we have
$$a^*(I^q) \ge dq+ a^*_\phi.$$
\end{enumerate}
In particular,
$$\stab_a(I) \le \max \left\{a^*_\pi+1, a^*\left(\R_{(a^*_\phi+1,*)}\right)+1, \reg\left(\widetilde{\R_{(a^*_\phi,*)}}\right), \reg^\phi_*(\O_\tx(a^*_\phi,0))\right\}.$$
\end{theorem}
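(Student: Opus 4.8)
The plan is to translate the statement about the graded local cohomology $H^i_{A_+}(I^q)$ into sheaf cohomology on $X$ via the Serre--Grothendieck correspondence, transport it to $\tx$ through $\pi$, and then read it off on $\ix$ through $\phi$. Write $B = A_0[f_0t,\dots,f_mt]$, so that $\ix = \Proj B$, and recall from the proof of Lemma \ref{lem.large} that $\R_{(*,q)} \cong I^q(dq)$ as graded $A$-modules, whence $\widetilde{I^q}(p+dq) = \widetilde{\R_{(*,q)}}(p)$ on $X$. The first thing I would record is a bridge identity: for $q > a^*_\pi$, Lemma \ref{lem.large}(1) degenerates the Leray spectral sequence for $\pi$, giving $H^i(X,\widetilde{I^q}(p+dq)) = H^i(\tx, \O_\tx(p,q))$ for all $i$ and $p$; and for $p > a^*_\phi$, Lemma \ref{lem.large}(2) degenerates the Leray spectral sequence for $\phi$, giving $H^i(\tx,\O_\tx(p,q)) = H^i(\ix, \widetilde{\R_{(p,*)}}(q))$. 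Thus in the range $q>a^*_\pi,\ p>a^*_\phi$ one gets $H^i(X,\widetilde{I^q}(p+dq)) = H^i(\ix,\widetilde{\R_{(p,*)}}(q))$, and through Serre--Grothendieck on $\ix$ the right-hand side is $[H^{i+1}_{B_+}(\R_{(p,*)})]_q$ for $i\ge 1$, with the $H^0,H^1$ cases controlled by the four-term sequence.

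For part (1) I would phrase the goal as $[H^i_{A_+}(I^q)]_n = 0$ for every $i$ and every $n>dq+a^*_\phi$, i.e.\ for every $p=n-dq>a^*_\phi$. Setting $p=a^*_\phi+1$ first: since $a^*(\R_{(a^*_\phi+1,*)})$ bounds all the $a^i$ of that module simultaneously, for $q>a^*(\R_{(a^*_\phi+1,*)})$ the $\ix$-side four-term sequence is clean and $[H^{i+1}_{B_+}(\R_{(a^*_\phi+1,*)})]_q=0$ for $i\ge1$, so the bridge identity forces the vanishing of $H^i_{A_+}(I^q)$ in degree $a^*_\phi+1+dq$ once $q>\max\{a^*_\pi,\,a^*(\R_{(a^*_\phi+1,*)})\}$. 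The crux is then to propagate this vanishing to all $p>a^*_\phi$. I would do this by exploiting that $A_1\cdot\R_{(p,*)} = \R_{(p+1,*)}$ for $p\ge 0$ (because $I^q$ is generated in degree $dq$, so $(I^q)_{p+dq+1} = A_1(I^q)_{p+dq}$ once $p\ge0$, and $p>a^*_\phi\ge-1$ lies in this range): this produces surjections $\bigoplus\widetilde{\R_{(p,*)}}\twoheadrightarrow\widetilde{\R_{(p+1,*)}}$ on $\ix$ and exhibits $\R$ as a graded $B$-algebra generated in $p$-degree one. Reading cohomology along the resulting Koszul-type presentation, in the spirit of \cite{TW}, shows that the regularity of $\widetilde{\R_{(p,*)}}$ on $\ix$ is non-increasing in $p$ in this range, so the vanishing secured at $p=a^*_\phi+1$ persists for all larger $p$; equivalently, one checks $a^*(\R_{(p,*)})\le a^*(\R_{(a^*_\phi+1,*)})$ for $p\ge a^*_\phi+1$.

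For part (2) I would instead sit at the threshold $p=a^*_\phi$, where $\phi$ does not simplify, and invoke Theorem \ref{pro.higherimage} applied to the morphism $\phi$, the sheaf $\F=\O_\tx$ (so $a=a^*_\phi$), its $\phi$-relative twisting sheaf $\M=\O_\tx(1,0)$, and the very ample $\O_\ix(1)$, noting that $\F(a)\otimes\phi^*\O_\ix(q) = \O_\tx(a^*_\phi,q)$ and $\F_a = \widetilde{\R_{(a^*_\phi,*)}}$. When $r_\phi(\O_\tx)\ge2$, part (3) of that theorem gives $H^{r_\phi-1}(\tx,\O_\tx(a^*_\phi,q))\ne0$ for $q\ge\reg^\phi_*(\O_\tx(a^*_\phi,0))$; combined with the bridge identity for $q>a^*_\pi$ this forces $H^{r_\phi-1}(X,\widetilde{I^q}(a^*_\phi+dq))\ne0$, hence $[H^{r_\phi}_{A_+}(I^q)]_{a^*_\phi+dq}\ne0$ and $a^*(I^q)\ge dq+a^*_\phi$. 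When $r_\phi(\O_\tx)\le1$, part (2) of the theorem yields $H^0(\tx,\O_\tx(a^*_\phi,q))\ne H^0(\ix,\widetilde{\R_{(a^*_\phi,*)}}(q))$ for $q\ge\max\{\reg(\widetilde{\R_{(a^*_\phi,*)}}),\reg^\phi_0(\O_\tx(a^*_\phi,0))\}$; identifying the left side with $H^0(X,\widetilde{I^q}(a^*_\phi+dq))$ through $\pi$ and comparing the two four-term Serre--Grothendieck sequences sharing the common term $(I^q)_{a^*_\phi+dq}=\R_{(a^*_\phi,q)}$ (the $\ix$-side being torsion-free in this degree once $q\ge\reg(\widetilde{\R_{(a^*_\phi,*)}})$) localizes the discrepancy into a nonzero $[H^0_{A_+}(I^q)]_{a^*_\phi+dq}$ or $[H^1_{A_+}(I^q)]_{a^*_\phi+dq}$, again giving $a^*(I^q)\ge dq+a^*_\phi$. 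Finally, intersecting the ranges of (1) and (2) yields equality $a^*(I^q)=dq+a^*_\phi$ for all $q$ above the maximum of the four listed thresholds, which is exactly the asserted bound on $\stab_a(I)$.

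I expect the main obstacle to be the propagation step in part (1): showing that vanishing at the single index $p=a^*_\phi+1$ forces vanishing for all $p>a^*_\phi$. Pure graded-module theory does not give this, since local cohomology need not vanish monotonically in the intermediate homological degrees, so the argument must genuinely use the geometry — either the equigeneration-driven surjectivity $A_1\cdot\R_{(p,*)}=\R_{(p+1,*)}$ to run a regularity-monotonicity induction on $\ix$, or the cogeneration of the top local cohomology on the $A$-side — while tracking the $H^0,H^1$ corrections from the four-term sequence uniformly in $q$. A secondary subtlety is the $r_\phi\le1$ case of part (2), where one must argue that the comparison of global sections pins the discrepancy on the $A_+$-torsion of $I^q$ rather than on the $\ix$-side.
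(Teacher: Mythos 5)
Your overall route coincides with the paper's: the same bridge identities obtained by degenerating the two Leray spectral sequences via Lemma \ref{lem.large} (valid for all $p$ once $q>a^*_\pi$, and for all $q$ once $p>a^*_\phi$), the same translation $\widetilde{I^q}(p+dq)=\widetilde{\R_{(*,q)}}(p)$, and, for part (2), essentially verbatim the paper's argument: apply Theorem \ref{pro.higherimage} to $\phi$ with $\F=\O_\tx$, $a=a^*_\phi$, $\F_a=\widetilde{\R_{(a^*_\phi,*)}}$, split into the cases $r_\phi(\O_\tx)\le 1$ and $r_\phi(\O_\tx)\ge 2$, transport through $\pi$ using $q>a^*_\pi$, and read off a nonzero $[H^i_{A_+}(I^q)]_{a^*_\phi+dq}$ via Serre--Grothendieck. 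That part of your proposal is correct and matches the paper.

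The divergence is your ``propagation step'' in part (1), and here the situation is delicate in both directions. First, the paper's own proof contains no such step: it derives $[H^i_{A_+}(I^q)]_{p+dq}=0$ for all $i$ at a pair $(p,q)$ with $p>a^*_\phi$ and $q>\max\{a^*_\pi,a^*(\R_{(p,*)})\}$, asserts from this that $a^*(I^q)<p+dq$, and then simply sets $p=a^*_\phi+1$. You are right that vanishing of all local cohomologies in the \emph{single} degree $dq+a^*_\phi+1$ does not by itself bound $a^*(I^q)$, since $\bigcup_i\operatorname{Supp}H^i_{A_+}$ can have gaps (already $M=k\oplus k(-2)$ over $k[x]$ shows this); so you have correctly isolated the one step the paper treats as immediate, which genuinely requires control of $a^*(\R_{(p,*)})$ for \emph{all} $p>a^*_\phi$, not just $p=a^*_\phi+1$. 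Second, however, your proposed repair is not actually carried out and, as sketched, does not work: the surjections $\bigoplus\widetilde{\R_{(p,*)}}\twoheadrightarrow\widetilde{\R_{(p+1,*)}}$ coming from $A_1\cdot\R_{(p,*)}=\R_{(p+1,*)}$ cannot by themselves show that $\reg(\widetilde{\R_{(p,*)}})$ or $a^*(\R_{(p,*)})$ is non-increasing in $p$ --- a surjection bounds nothing about the target without control of the kernel, and the honest Koszul chase (on the sections of $\O_\tx(1,0)$, say) requires vanishing at several twists $p'\le p$ in \emph{shifted} cohomological degrees, i.e.\ a Mumford-regularity staircase $H^i(\F(n-i))=0$, which is strictly stronger than the all-$i$ vanishing at the single index $p=a^*_\phi+1$ that you possess. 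Note also a structural warning sign: if the monotonicity $a^*(\R_{(p,*)})\le a^*(\R_{(a^*_\phi+1,*)})$ for all $p\ge a^*_\phi+1$ followed from such soft considerations, then Chardin's bound $\max_{p>a^*_\phi}\{a^*(\R_{(p,*)})\}$ from \cite{Ch2} would collapse to the paper's bound with no need for the extra hypothesis $q>a^*_\pi$ at all, contrary to the tradeoff the paper advertises. So as written your part (1) is open exactly at its declared crux; your contribution there is a sharp diagnosis of the subtlety (one the paper's write-up glosses over) rather than a proof, and to complete it you would need either a genuine proof of the monotonicity/stabilization of $a^*(\R_{(p,*)})$ for $p>a^*_\phi$ or a different mechanism exploiting $q>a^*_\pi$ uniformly in $p$.
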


\begin{proof} (1) By Lemma \ref{lem.large}, for $p > a^*_\phi$ and $q > a^*_\pi$, the following spectral sequences degenerate:
\begin{align*}
H^i(X, R^j \pi_* \O_\tx(p,q)) &\Rightarrow H^{i+j}(\tx, \O_\tx(p,q)) \\
H^i(\ix, R^j \phi_* \O_\tx(p,q)) &\Rightarrow H^{i+j}(\tx, \O_\tx(p,q)).
\end{align*}
Thus, for all $i \ge 0$, $p > a^*_\phi$ and $q > a^*_\pi$, we have
\begin{align}
	H^i(X, \widetilde{\R_{(*,q)}}(p)) = H^i(\tx, \O_\tx(p,q)) = H^i(\ix, \widetilde{\R_{(p,*)}}(q)). \label{eq.degenerate}
\end{align}

Observe that for $q > a^*(\R_{(p,*)})$, it follows from the Serre-Grothedieck correspondence that $H^i(\ix, \widetilde{\R_{(p,*)}}(q)) = 0$ for all $i > 0$ and $H^0(\ix, \widetilde{\R_{(p,*)}}(q)) = \R_{(p,q)}.$ Moreover, for any $p,q \in \ZZ$, we have $\widetilde{\R_{(*,q)}}(p) = \widetilde{I^q}(p+dq)$.
Therefore, for $p > a^*_\phi$ and $q > \max \{a^*_\pi, a^*(\R_{(p,*)})\}$, it follows from (\ref{eq.degenerate}) that $H^i(X,\widetilde{I^q}(p+qd)) = 0$ for all $i > 0$ and $H^0(X,\widetilde{I^q}(p+qd)) = \R_{(p,q)} = [I^q]_{p+qd}$. The Serre-Grothedieck correspondence then gives, for all $p > a^*_\phi$ and $q > \max\{a^*_\pi, a^*(\R_{(p,*)})\}$,
$$a^*(I^q) < p+dq.$$

Now, choose $p = a^*_\phi + 1 > a^*_\phi$, we get that, for all $q > \max\{a^*_\pi, a^*(\R_{(a^*_\phi+1,*)})\}$,
$$a^*(I^q) < a^*_\phi+1+dq.$$
That is, for all $q > \max\{a^*_\pi, a^*(\R_{(a^*_\phi+1,*)})\}$, we have
$$a^*(I^q) \le dq + a^*_\phi.$$

(2) As before, for $q > a^*_\pi$, the Leray spectral sequence
$$H^i(X, R^j \pi_* \O_\tx(p,q)) \Rightarrow H^{i+j}(\tx, \O_\tx(p,q)),$$
degenerates, and so we have $H^i(X, \widetilde{\R_{(*,q)}}(p)) = H^i(\tx, \O_\tx(p,q))$ for all $i \ge 0$. That is, for all $i \ge 0$, $q > a^*_\pi$ and $p \in \ZZ$,
$$H^i(X, \widetilde{I^q}(p+dq)) = H^i(\tx, \O_\tx(p,q)).$$
Moreover, by Theorem \ref{pro.higherimage}, we get that for
$q \ge \max\left\{\reg\left(\widetilde{\R_{(a^*_\phi,*)}}\right), \reg^\phi_*(\O_\tx(a^*_\phi,0))\right\},$
either $H^0(\tx, \O_\tx(a^*_\phi,q)) \not= H^0(\ix, \widetilde{\R_{(a^*_\phi,*)}}(q)) = \R_{(a^*_\phi,q)} = [I^q]_{a^*_\phi+dq}$ or $H^i(\tx, \O_\tx(a^*_\phi,q)) \not= 0$ for some $i > 0$. Hence, together with the Serre-Grothedieck correspondence, we now deduce that
$$a^*(I^q) \ge a^*_\phi+dq \ \forall \ q \ge \max\left\{a^*_\pi+1, \reg\left(\widetilde{\R_{(a^*_\phi,*)}}\right), \reg^\phi_*(\O_\tx(a^*_\phi,0))\right\}.$$
The last statement of the theorem is a straightforward consequence of (1) and (2).
\end{proof}

The following example illustrates that the bound for $q$ in Theorem \ref{thm.main}.(1) is sharp.

\begin{example} \label{ex.5}
Let $A = k[x,y]$ and let $I = (x^5, x^4y, xy^4, y^5)$. It can be seen that for $q \ge 3$, $I^q = (x,y)^{5q}$. Thus,
$$a^*(I^q) = \left\{ \begin{array}{ll} 6 & \text{ if } q = 1 \\ 10 & \text{ if } q = 2 \\ 5q-1 & \text{ otherwise.} \end{array} \right.$$
This shows that $a^*_\phi = -1$ and $\stab_a(I) = 3$.

Note that, as argued in Corollary \ref{cor.mprimary} below, the Rees algebra $\R = A[It]$ is locally Cohen-Macaulay over $\PP^1 = \Proj A$, and so $a^*_\pi = -1$. On the other hand, $\R_{(a^*_\phi+1,*)} = \R_{(0,*)} = k[It]$, and direct computation using Macaulay 2 \cite{M2} shows that $a^*(\R_{(0,*)}) = 2$.
\end{example}

As an immediate consequence of Theorem \ref{thm.main}, we recover a slight improvement of \cite[Proposition 6.7]{Ch2}. A large class of ideals which satisfy condition (1) of Corollary \ref{cor.6.7} is that of ideals for which the Rees algebras are Cohen-Macaulay.

\begin{corollary}\label{cor.6.7}
Let $A$ be a standard graded algebra over a ring $A_0$ and let $I \subseteq A$ be a homogeneous ideal generated in degree $d > 0$. Suppose that either one of the following conditions is satisfied:
\begin{enumerate}
\item for every $\pp \in \Proj A$, the Rees algebra $A_{(\pp)}[I_{(\pp)}t]$ is Cohen-Macaulay; or
\item for every $\pp \in \Proj A$, $\reg(A_{(\pp)}[I_{(\pp)}t]) = 0$.
\end{enumerate}
Then, for all $q > a^*\left(\R_{(a^*_\phi+1,*)}\right)$, we have
$$a^*(I^q) \le dq+a^*_\phi.$$
\end{corollary}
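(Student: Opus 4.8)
The plan is to derive the corollary directly from Theorem~\ref{thm.main}.(1) by showing that under either hypothesis the quantity $a^*_\pi$ is at most $0$, so that the requirement $q > a^*_\pi$ in that theorem is automatically met for every power $q \ge 1$ and the bound $q > \max\{a^*_\pi, a^*(\R_{(a^*_\phi+1,*)})\}$ collapses to $q > a^*(\R_{(a^*_\phi+1,*)})$.

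First I would treat the two hypotheses separately to establish $a^*_\pi \le 0$. Under condition (1), I would observe that the hypothesis that $A_{(\pp)}[I_{(\pp)}t]$ is Cohen--Macaulay for every $\pp \in \Proj A$ is precisely the statement that $\O_\tx$ is locally Cohen--Macaulay over $X$, since $\R_{\Spec \O_{X,\pp}} = A_{(\pp)}[I_{(\pp)}t]$; Lemma~\ref{lem.lCM} then gives $a^*_\pi = -1 \le 0$. Under condition (2), I would note that $\reg_\pp(\O_\tx) = \reg(\R_{\Spec \O_{X,\pp}}) = \reg(A_{(\pp)}[I_{(\pp)}t]) = 0$ for every $\pp$, so that $\reg_\pi = \sup_{\pp} \reg_\pp(\O_\tx) = 0$; since $a^*_\pi \le \reg_\pi$ by the very definition of these invariants (equivalently, by Lemma~\ref{lem.Prop64}), this yields $a^*_\pi \le 0$ as well.

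With $a^*_\pi \le 0$ in hand, the argument concludes quickly: any power we consider satisfies $q \ge 1 > 0 \ge a^*_\pi$, so for every $q > a^*(\R_{(a^*_\phi+1,*)})$ we automatically have $q > \max\{a^*_\pi, a^*(\R_{(a^*_\phi+1,*)})\}$, and Theorem~\ref{thm.main}.(1) gives $a^*(I^q) \le dq + a^*_\phi$. I do not anticipate a genuine obstacle here, as the corollary is essentially a restatement of Theorem~\ref{thm.main}.(1) once the redundancy of the term $a^*_\pi$ is recognized; the only point requiring care is the bookkeeping identifying the local Rees algebras $A_{(\pp)}[I_{(\pp)}t]$ with the stalkwise sections $\R_{\Spec \O_{X,\pp}}$, so that Lemmas~\ref{lem.lCM} and~\ref{lem.Prop64} apply verbatim.
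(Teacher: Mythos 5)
Your proposal is correct and follows essentially the same route as the paper, which likewise deduces $a^*_\pi \le 0$ from Lemma~\ref{lem.lCM} under condition (1) and Lemma~\ref{lem.Prop64} under condition (2), and then applies Theorem~\ref{thm.main}.(1), noting that $q \ge 1 > a^*_\pi$ makes that term in the maximum redundant. Your explicit identification of the stalkwise algebras $\R_{\Spec \O_{X,\pp}}$ with $A_{(\pp)}[I_{(\pp)}t]$ merely spells out bookkeeping the paper leaves implicit.
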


\begin{proof} By Lemmas \ref{lem.lCM} and \ref{lem.Prop64}, we have $a^*_\pi \le 0$. The assertion follows from Theorem \ref{thm.main}.
\end{proof}

When the $a^*$-invariant defect sequence is a non-increasing sequence, we obtain a bound for the stability index $\stab_a(I)$ of $I$ as follows.

\begin{corollary} \label{cor.defect}
	Let $A$ be a standard graded algebra over a ring $A_0$ and let $I \subseteq A$ be a homogeneous ideal generated in degree $d > 0$. Suppose that the sequence $\left\{a^*(I^q) - dq\right\}_{q \ge 1}$ is a non-increasing sequence. Then, for all $q > \max\left\{a^*_\pi, a^*\left(\R_{(a^*_\phi+1,*)}\right)\right\}$, we have
	$$a^*(I^q) = dq+ a^*_\phi.$$
\end{corollary}

\begin{proof} By \cite[Theorem 2.6]{Ha}, it is known that for all $q \gg 0$, $a^*(I^q) = dq+a^*_\phi$. Thus, since the sequence $\{a^*(I^q) - dq\}_{q \ge 1}$ is non-increasing, we must have $a^*(I^q) -dq \ge 0$ for all $q \ge 1$. The conclusion now follows from Theorem \ref{thm.main}.
\end{proof}

Corollary \ref{cor.defect}, particularly, recovers the equigenerated version of (\ref{eq.maincor}).

\begin{corollary}\label{cor.mprimary}
Let $A$ be a standard graded polynomial ring over a field $k$ and let $\mm$ be its maximal homogeneous ideal. Let $I \subseteq A$ be a homogeneous $\mm$-primary ideal generated in degree $d > 0$. Then, for all $q > a^*\big(\R_{(a^*_\phi+1,*)}\big)$, we have
$$a^*(I^q) = dq + a^*_\phi \text{ and } \reg(I^q) = dq + \reg_\phi.$$
In particular,
$$\stab_{\reg}(I) = \stab_a(I) \le  a^*\big(\R_{(a^*_\phi+1,*)}\big)+1.$$
\end{corollary}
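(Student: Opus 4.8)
The plan is to deduce everything from Corollary \ref{cor.defect}, for which two inputs are needed: that $a^*_\pi \le 0$ (so that it disappears from the bound $\max\{a^*_\pi, a^*(\R_{(a^*_\phi+1,*)})\}$ appearing there), and that the $a^*$-defect sequence $\{a^*(I^q) - dq\}_{q \ge 1}$ is non-increasing. The first input is immediate: since $I$ is $\mm$-primary we have $\sqrt I = \mm$, so every $\pp \in \Proj A$ is necessarily $\ne \mm$ and satisfies $I \not\subseteq \pp$, whence $I_{(\pp)} = A_{(\pp)}$ and $A_{(\pp)}[I_{(\pp)}t] = A_{(\pp)}[t]$ is a polynomial ring over the regular ring $A_{(\pp)}$, in particular Cohen--Macaulay. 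Thus $\O_\tx$ is locally Cohen--Macaulay over $X$, and Lemma \ref{lem.lCM} gives $a^*_\pi = -1$.

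Next I record a local-cohomology computation that will be used twice. Applying $H^\bullet_\mm(-)$ to $0 \to I^q \to A \to A/I^q \to 0$, and using that $A/I^q$ has finite length (so its only nonzero local cohomology is $H^0_\mm(A/I^q) = A/I^q$) together with $H^i_\mm(A) = 0$ for $i < N := \dim A$, one obtains, for $N \ge 2$, isomorphisms $H^1_\mm(I^q) \cong A/I^q$, $H^i_\mm(I^q) = 0$ for $2 \le i \le N-1$, and $H^N_\mm(I^q) \cong H^N_\mm(A)$. Writing $s(q) = \min\{s : \mm^s \subseteq I^q\}$, the top nonzero degree of $A/I^q$ is $s(q)-1$, so $a^*(I^q) = s(q)-1$; and since $a^N(I^q)+N = a^N(A)+N = 0 \le s(q)$, we get $\reg(I^q) = \max\{a^1(I^q)+1, a^N(I^q)+N\} = s(q)$. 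In particular $\reg(I^q) = a^*(I^q) + 1$ for every $q \ge 1$; the case $N = 1$ is checked directly.

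The crux is the monotonicity of $\{a^*(I^q) - dq\}$, equivalently $s(q+1) \le s(q) + d$ for all $q \ge 1$. Here I would pass to an infinite base field (harmless for all invariants involved) and choose a minimal reduction $J = (g_1, \dots, g_N) \subseteq I$, a complete intersection of forms of degree $d$; then $A/J$ is Artinian Gorenstein with top socle degree $N(d-1)$. From $\mm^{s(q)} \subseteq I^q$ we get $J\mm^{s(q)} \subseteq JI^q \subseteq I^{q+1}$, and since $J$ is generated in degree $d$ one has $(J\mm^{s(q)})_{s(q)+d} = J_d \cdot A_{s(q)} = (J)_{s(q)+d}$, which equals all of $A_{s(q)+d}$ as soon as $s(q)+d > N(d-1)$. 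This yields $\mm^{s(q)+d} \subseteq I^{q+1}$, i.e. $s(q+1) \le s(q)+d$, whenever $s(q) > (N-1)(d-1)-1$; since $s(q) \ge qd$, this already covers all $q \ge 1$ when $N = 2$ and all sufficiently large $q$ in general. I expect the uniform statement for \emph{every} $q \ge 1$ to be the main obstacle: the socle estimate above leaves a bounded range of small powers, which must be dispatched either by a finer analysis of those powers or by invoking the known non-increasing behaviour of the regularity (equivalently $a^*$-) defect of $A_+$-primary ideals in a polynomial ring recalled in the introduction.

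Granting the monotonicity, Corollary \ref{cor.defect} together with $a^*_\pi = -1$ gives $a^*(I^q) = dq + a^*_\phi$ for all $q > a^*(\R_{(a^*_\phi+1,*)})$. For the regularity statement I use the identity $\reg(I^q) = a^*(I^q)+1$ from the second paragraph: comparing it, for $q \gg 0$, with the asymptotic equalities $a^*(I^q) = dq + a^*_\phi$ and $\reg(I^q) = dq + \reg_\phi$ (recalled in the introduction from \cite{Ch1, EH, Ha}) forces $\reg_\phi = a^*_\phi + 1$. Hence $\reg(I^q) = a^*(I^q)+1 = dq + a^*_\phi + 1 = dq + \reg_\phi$ on the same range $q > a^*(\R_{(a^*_\phi+1,*)})$. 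Finally, $\reg(I^q) - (dq+\reg_\phi) = a^*(I^q) - (dq + a^*_\phi)$ identically in $q$, so the two defect functions vanish for exactly the same powers; therefore $\stab_{\reg}(I) = \stab_a(I) \le a^*(\R_{(a^*_\phi+1,*)}) + 1$.
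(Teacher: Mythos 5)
Your proposal is correct and follows essentially the same route as the paper: both establish $a^*_\pi = -1$ by observing that $I_{(\pp)} = (1)$ for every $\pp \in \Proj A$ (so $\O_\tx$ is locally Cohen--Macaulay over $X$ and Lemma \ref{lem.lCM} applies), both rest on the identity $\reg(I^q) = a^*(I^q) + 1$ for all $q \ge 1$ (which the paper asserts as ``easy to see'' and you verify correctly via the long exact sequence of local cohomology for $0 \to I^q \to A \to A/I^q \to 0$), and both conclude by feeding the non-increasing defect sequence into Corollary \ref{cor.defect}. The only point of divergence is the monotonicity $s(q+1) \le s(q) + d$: your minimal-reduction argument is sound as far as it goes, but, exactly as you suspect, it genuinely breaks down for small $q$ once $\dim A \ge 3$, since the socle condition $s(q) + d > N(d-1)$ can fail there; the paper does not attempt a direct proof and instead cites \cite[Proposition 1.4]{EU} for the non-increasing behaviour of $\{a^*(I^q) - dq\}_{q \ge 1}$. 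One small correction to your fallback: what the introduction recalls is the stabilization theorem \cite[Theorem 1.1]{EU}, not the monotonicity, so you should cite \cite[Proposition 1.4]{EU} specifically; with that citation in place your argument is complete. Your explicit derivation of $\reg_\phi = a^*_\phi + 1$ by comparing the asymptotic equalities, and the observation that the two defect functions coincide identically (giving $\stab_{\reg}(I) = \stab_a(I)$), correctly fill in steps the paper leaves implicit.
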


\begin{proof} Since $I$ is a $\mm$-primary ideal and $A$ is a polynomial ring over a field, it is easy to see that $a^*(I^q) = a^1(I^q)$ and $\reg (I^q) = a^*(I^q) + 1$ for all $q \ge 1$. By \cite[Proposition 1.4]{EU}, we now know that the sequence $\{a^*(I^q) - dq\}_{q \ge 1}$ is a non-increasing sequence.
	
Furthermore, since $I$ is $\mm$-primary, for any $\pp \in \Proj A$, $I_{(\pp)} = (1)$. Thus, $\O_\tx$ is locally Cohen-Macaulay over $X$. This implies that $a^*_\pi = -1$. The statement then follows from Corollary \ref{cor.defect}.
\end{proof}

\begin{example} Let $A = k[x,y]$ and let $I = (x^7, x^6y, x^4y^3, x^3y^4, xy^6, y^7)$. It can be seen that for $q \ge 2$, $I^q = (x,y)^{7q}$. Thus,
$$\reg(I^q) = a^*(I^q) + 1 = \left\{\begin{array}{ll} 8 & \text{if } q = 1 \\ 7q & \text{if } q \ge 2. \end{array}\right.$$
This shows that $\reg_\phi = 0$, $a^*_\phi = -1$, and $\stab_{\reg}(I) = \stab_a(I) = 2$.

As before, it can be seen that the Rees algebra $\R = A[It]$ is locally Cohen-Macaulay over $\PP^1 = \Proj A$. This implies that $a^*_\pi = -1$. Direct computation using Macaulay2 \cite{M2} further gives 
$$a^*\big(\R_{(0,*)}\big) = 1, \R_{(-1,*)} = 0, \text{ and } \reg^\phi_*(\O_\tx(-1,0)) = 1.$$
This example shows that the bounds for the stability indexes of $I$ in Theorem \ref{thm.main} and Corollary \ref{cor.mprimary} are sharp.
\end{example}


\end{document}